\newtheorem{theorem}{Theorem}[section]
\newtheorem{lemma}[theorem]{Lemma}
\newtheorem{proposition}[theorem]{Proposition}
\newtheorem{corollary}[theorem]{Corollary}
\theoremstyle{definition}
\newtheorem{definition}[theorem]{Definition}
\newtheorem{example}[theorem]{Example}
\newtheorem{remark}[theorem]{Remark}
\newtheorem*{Acknowledgement}{\textnormal{\textbf{Acknowledgement}}}
\newtheorem*{Conflict of interest statement and data availability}{\textnormal{\textbf{Conflict of interest statement and data availability}}}
\numberwithin{equation}{section}
\def\fnote#1{\footnote}
\def\ignora#1{}
\def\n3#1{\left\vert  \! \left\vert \! \left\vert \, #1 \, \right\vert \!
  \right\vert \! \right\vert }
\begin{document}

\title{ Non-rough norms and dentability in spaces of operators }

\author{Susmita Seal$ ^{1}$, Sudeshna Basu$^{2}$, Julio Becerra Guerrero$^{3}$ and Juan Miguel Villegas Yeguas$^{4}$}

\address{{$^{1}$} Susmita Seal, 
	Department of Mathematics, 
	Ramakrishna Mission Vivekananda Education and Research Institute, 
	Belur Math,  Howrah 711202,
	West Bengal, India}

\email{susmitaseal1996@gmail.com}
	
\address{{$^{2}$}   Sudeshna Basu,
	Department of Mathematics and Statistics, 
				Loyola University, 
				Baltimore, MD 21210 and
				Department of Mathematics,
				George Washington University,
				Washington DC 20052 USA  }

\email{sudeshnamelody@gmail.com} 

\address{{$^{3}$} Julio Becerra Guerrero, 
		Universidad de Granada,
		Facultad de Ciencias.
		Departamento de An\'{a}lisis Matem\'{a}tico, 18071-Granada,
		Spain} 
	
	\email{juliobg@ugr.es}

\address{{$^{4}$} Juan Miguel Villegas Yeguas,    
	Universidad de Granada, 
	Facultad de Ciencias.
	Departamento de An\'{a}lisis Matem\'{a}tico, 18071-Granada,
	Spain} 

\email{ juanmivy@correo.ugr.es}

\subjclass{46B20, 46B28}
\keywords{Slices, Stability, Tensor product, Space of operators, Huskable, Denting, Dentable, Small Combination of Slices.}
\date{}
\sloppy

\maketitle \markboth{J. Becerra, S. Basu, S. Seal and J. M. Villegas
}{ Non-rough norms in   operator spaces }

\bigskip

\begin{abstract} 
In this work, we study  non-rough norms in $L(X,Y),$ the space of bounded linear operators between Banach spaces $X$ and $Y.$ We prove that $L(X,Y)$ has
	non-rough norm if and only if $X^*$ and $Y$ have non-rough norm. We show that the injective tensor product $X\hat{\otimes}_{\varepsilon} Y$ has non-rough norm if and only if both $X$ and $Y$ have non-rough norm. We also give an example to show that non-rough norms are not stable under projective tensor product. We also study a related concept namely the small diameter properties in the context of $L(X,Y)^*.$ These results  leads to a discussion on 
	stability of the small diameter properties for projective and injective tensor product spaces.
	
\end{abstract}

\section{Introduction.}

We will consider only real Banach spaces.
Throughout the paper, we will denote the closed unit ball by $B_X$, the unit sphere by $S_X$ and the closed ball of radius $r >0$ and center $x$ by $B_X(x, r).$ 
 $X^*$ stand for the topological dual of $X$. We refer the reader to the monograph \cite{Bo1} for notions of convexity theory that we will be using here. 

\begin{definition}
For a Banach space $X,$
\begin{enumerate}
\item a subset $C \subset B_{X^*}$ is a norming set for $X,$ if $\|x\| =
\sup\{|x^*(x)| : x^* \in C\}$ for all $x \in X$. A closed subspace $F
\subset X^*$ is a norming subspace, if $B_F$ is a norming set for $X$.
\item a slice in the bounded set $C\subset X$ is the set $$S(C, x^*, \alpha) = \{x \in C : x^*(x) > \mbox{sup}~ x^*(C) - \alpha \},$$ where $x^*\in S_{X^*}$ and $0<\alpha <1$. Analogously we can define $w^*$-slices in $X^*$ by choosing the determining functional from the predual space.
\item a function $f:X\rightarrow \mathbb{R}$ is called G$\hat{a}$teaux differentiable at $x\in X,$ if for each $h\in X,$ $$f'(x)(h)=\lim\limits_{t\rightarrow 0} \frac{f(x+th)-f(x)}{t}$$ exists and is a linear continuous function in h. The functional $f'(x)$ is called the G$\hat{a}$teaux derivative of f at x.
\item a function $f:X\rightarrow \mathbb{R}$ is called Fr$\acute{e}$chet differentiable at $x\in X,$ if there exists $f'(x)\in X^*$ such that 
$$\lim\limits_{y\rightarrow 0} \frac{f(x+y)-f(x)-f'(x)(y)}{\|y\|}=0.$$
The functional $f'(x)$ is called the Fr$\acute{e}$chet derivative of f at x.
\end{enumerate}
\end{definition} 

A Fr$\acute{e}$chet differentiable map is always G$\hat{a}$teaux differentiable but not the converse \cite{Ph}.
\begin{definition} 
  The
roughness of $X$ at $u\in S_X$ is 
$$\eta(X,u)=\limsup\limits_{\Vert h\Vert\rightarrow 0} \frac{\Vert u+h\Vert+\Vert u-h\Vert-2}{\Vert h\Vert}.$$
	For $\varepsilon>0$, norm of $X$ is said to be
	$\varepsilon$-rough, if $\eta(X,u)\geq \varepsilon$ for every $u\in S_X$. We say that norm of $X$ is rough,
	 if it is $\varepsilon$-rough for some $\varepsilon >0$ and it is non-rough otherwise.
	 \end{definition}
Rough norms were first studied by Day \cite{Da}, Kurzweil \cite{Ku}, Leach and Whitfield \cite{LW}, John and Zizler \cite{JZ} and G. Godefroy in \cite{Go}.
Non-rough norms have been used in order to characterize when a Banach space is Asplund. In fact, G. Godefroy
shows in \cite{Go}, that a Banach space $X$ is Asplund  if and only if every equivalently norm in $X$ is non-rough. It was proved in \cite{DGZ}, for  $u\in S_X$,  $\eta(X,u)=0$ if and only if the norm is Fr\'echet differentiable at $u$.
There is a Banach space X whose norm is both rough and smooth. There is also a  Banach space having a non rough norm with no
point of Gateaux differentiability, for details, see  \cite{JZ}.

To prove our aim 
let us recall the following three properties already studied in \cite{Ba}, \cite{BR} and \cite{BS}. 
 
\begin{definition}
A Banach space $X$ has 
\begin{enumerate}
	
	\item  Ball Dentable Property ($BDP$) if $B_X$ has  
	slices of arbitrarily small diameter. 
	
	\item  Ball Huskable Property ($BHP$) if $B_X$ 
	has nonempty relatively weakly open subsets of arbitrarily small diameter.
	
	\item  Ball Small Combination of Slice Property ($BSCSP$) 
	if  $B_X$ has convex combination of slices of arbitrarily 
	small diameter.
\end{enumerate}
\end{definition}
One can analogously define $w^*$-$BDP$, $w^{*}$-$BHP$ and $w^{*}$-$BSCSP$ in a dual space. The implications among all these properties are clearly described in the following diagram.

\newpage
 
$$ BDP \Longrightarrow \quad BHP \Longrightarrow \quad  BSCSP$$ $ \quad\quad\quad\quad\quad\quad\quad\quad\quad\quad \Big \Uparrow \quad \quad\quad\quad\quad \Big \Uparrow \quad \quad\quad\quad\quad \Big \Uparrow$  $$ w^*-BDP \Longrightarrow  w^*-BHP \Longrightarrow  w^*-BSCSP$$

but reverse implications are not true in general \cite{BS}. 

It is clear that Radon-Nikod$\acute{y}$m Property ($RNP$) implies $BDP$, Point of Continuity Property ($PCP$) implies $BHP$ and Strongly Regular ($SR$) implies $BSCSP$. Therefore, the properties $BDP$, $BHP$ and $BSCSP$ are "localised" (to the closed unit ball) versions of the three geometric properties $RNP$, $PCP$ and $SR$. For more details on $RNP$, $PCP$ and $SR$, see \cite{GGMS}.

The space of all bounded bilinear forms defined on $X\times Y$ is denoted by $B(X\times Y).$
Let $x\in X$,  $y\in Y$, then  $x\otimes y: B(X\times Y)\rightarrow \mathbb{R} $ is defined as $A \mapsto A(x,y)$, where $A\in B(X\times Y).$
Also we use $L(X, Y)$ to denote the space of all bounded and linear operators from $X$ into $Y$.
The tensor product of $X$ and $Y$ is denoted by $X\otimes Y$ and it is the subspace spanned by all elements $x\otimes y$ where $x\in X$ and $y\in Y.$

\begin{definition}
The projective norm $\|.\|_{\pi}$ on tensor product $X\otimes Y$ is given by  $\|u\|_{\pi}= \inf \{ \sum_{i=1}^{n} \|x_i\|  \|y_i\| : u=\sum_{i=1}^{n} x_i \otimes y_i\}.  $
\end{definition}

 The pair $(X\otimes Y, \|.\|_{\pi})$ is denoted by $X \otimes_{\pi} Y$ and its completion is called projective tensor product, denoted by $X \hat{\otimes}_{\pi} Y.$ The space $ L(X,Y^*)$ is linearly isometric to the
topological dual of $X\widehat{\otimes}_\pi Y$.

\begin{definition}
 The injective norm $\|.\|_{\varepsilon}$ on tensor product $X\otimes Y$ is given by  $\|u\|_{\varepsilon}= \sup \{ |\sum_{i=1}^{n} x^*(x_i) y^*(y_i)| : u=\sum_{i=1}^{n} x_i \otimes y_i, x^*\in S_{X^*},y^*\in S_{Y^*}\}.  $
 \end{definition}
 The pair $(X\otimes Y, \|.\|_{\varepsilon})$ is denoted by $X \otimes_{\varepsilon} Y$ and its completion is called injective tensor product, denoted by $X \hat{\otimes}_{\varepsilon} Y.$ Every $u\in X \hat{\otimes}_{\varepsilon} Y$ can be considered as an element $T_u\in L(X^*,Y)$ which is $w^*-w$ continuous. In particular, $y^* \circ T_u\in X$ for all $y^*\in Y^*.$


 In this paper we study the existence of non-rough 
norms in spaces of operators. We prove that the space of operators $L(X,Y)$ has non-rough norm if and only if $X^*$ and $Y$ have non-rough norm. Also we get some necessary conditions so that  $L(X,Y)^*$ has $w^*$-$BSCSP.$  We also show that the injective tensor product $X\hat{\otimes}_{\varepsilon} Y$ has non rough norm if and only if both $X$ and $Y$ have non-rough norm. We give counter examples to show non-rough norm is not stable under projective tensor product.
  In some of our proofs, we use similar techniques as in \cite{BGLPRZ2}, \cite{LLZ1}. 
  
\section{Small diameter property in projective tensor products.}

Let us recall the following results to prove the stability results of $BDP$ in projective tensor product.
\newpage

 \begin{lemma}\label{rough} \cite{DGZ} 
 	Let $X$ be Banach space. The following are equivalent.
 	\begin{enumerate}
 		\item $X$ is non-rough.
 		\item $X^*$ has $w^*$-$BDP.$
 	\end{enumerate} 
 	
 	 \end{lemma}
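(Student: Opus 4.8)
The plan is to reduce the equivalence to a single pointwise relationship between the roughness functional $\eta(X,\cdot)$ and the diameters of those $w^*$-slices of $B_{X^*}$ that are determined by unit vectors of $X$. First I note that every $w^*$-slice of $B_{X^*}$ has the form $S(B_{X^*},u,\delta)=\{x^*\in B_{X^*}:x^*(u)>1-\delta\}$ for some $u\in S_X$ and $\delta>0$, because its determining functional is evaluation at a predual vector, which may be normalised (so that $\sup x^*(B_{X^*})=\|u\|=1$). Since the triangle inequality forces $\eta(X,u)\ge 0$, the space $X$ is rough exactly when $\inf\{\eta(X,u):u\in S_X\}>0$, while $X^*$ fails $w^*$-$BDP$ exactly when $\inf\{\operatorname{diam}S(B_{X^*},u,\delta):u\in S_X,\ \delta>0\}>0$. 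Thus it suffices to establish, for each fixed $u\in S_X$, the identity $\eta(X,u)=\lim_{\delta\to0^+}\operatorname{diam}S(B_{X^*},u,\delta)$ (the limit existing since $\operatorname{diam}S(B_{X^*},u,\delta)$ decreases as $\delta\downarrow 0$); the lemma then follows by taking infima over $u$ and negating.

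For the inequality $\eta(X,u)\le\operatorname{diam}S(B_{X^*},u,\delta)$, valid for every $\delta>0$, I would argue with supporting functionals. Given a small $h$, choose by Hahn--Banach $f,g\in S_{X^*}$ with $f(u+h)=\|u+h\|$ and $g(u-h)=\|u-h\|$. Writing $\|u+h\|+\|u-h\|-2=(f(u)-1)+(g(u)-1)+(f-g)(h)$ and using $f(u),g(u)\le1$ gives $(f-g)(h)\ge\|u+h\|+\|u-h\|-2$, hence $\|f-g\|\ge\frac{\|u+h\|+\|u-h\|-2}{\|h\|}$. Moreover $f(u)=\|u+h\|-f(h)\ge 1-2\|h\|$, and likewise for $g$, so as soon as $\|h\|<\delta/2$ both functionals lie in $S(B_{X^*},u,\delta)$. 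Running this along a sequence $h_n\to0$ realising the $\limsup$ defining $\eta(X,u)$ yields $\operatorname{diam}S(B_{X^*},u,\delta)\ge\eta(X,u)$, and letting $\delta\to0^+$ gives $\lim_{\delta\to0^+}\operatorname{diam}S(B_{X^*},u,\delta)\ge\eta(X,u)$.

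For the reverse inequality I would start, for a given $\delta$, from functionals $f,g\in S(B_{X^*},u,\delta)$ nearly realising the slice diameter, pick a unit vector $h$ with $(f-g)(h)$ close to $\|f-g\|$, and estimate, for $s>0$,
$$\|u+sh\|+\|u-sh\|-2\ \ge\ (f(u)-1)+(g(u)-1)+s(f-g)(h)\ \ge\ -2\delta+s\,(f-g)(h),$$
so that dividing by $s$ leaves a lower bound of the shape $(f-g)(h)-2\delta/s$. The main obstacle is precisely this error term $2\delta/s$: roughness is a $\limsup$ as $s\to0$, whereas the slice carries a fixed width $\delta$, and the crude bound diverges. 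The remedy is to let the two parameters tend to zero together along a diagonal — for each small $s$ I would use a slice of width $\delta=\delta(s)$ with $\delta(s)/s\to0$ (for example $\delta(s)=s^2$), exploiting that $\operatorname{diam}S(B_{X^*},u,\delta(s))\ge\lim_{\delta\to0^+}\operatorname{diam}S(B_{X^*},u,\delta)$ by monotonicity. Along this diagonal the error $2\delta/s\to0$ while $\|sh\|=s\to0$, so the $\limsup$ picks up the full value $\lim_{\delta\to0^+}\operatorname{diam}S(B_{X^*},u,\delta)$, giving $\eta(X,u)\ge\lim_{\delta\to0^+}\operatorname{diam}S(B_{X^*},u,\delta)$. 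Combined with the previous paragraph this proves the identity, and hence the lemma.
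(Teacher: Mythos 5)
Your proof is correct. The paper gives no proof of this lemma at all --- it is quoted directly from [DGZ] --- and your argument (the pointwise identity $\eta(X,u)=\lim_{\delta\to0^+}\operatorname{diam}S(B_{X^*},u,\delta)$ via supporting functionals in one direction and a diagonal choice $\delta=\delta(s)$ with $\delta(s)/s\to0$ in the other) is precisely the standard \v{S}mulyan-type duality argument underlying the cited result, so there is nothing to compare beyond noting that your write-up correctly supplies the omitted details.
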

 
 \begin{theorem} \label{bs 1st}
\cite{BS} A Banach space $X$ has $BDP$ (resp. $BHP$ , $BSCSP$) if and only if $X^{**}$ has $w^*$-$BDP$ (resp. $w^*$-$BHP$, $w^*$-$BSCSP$). 
\end{theorem}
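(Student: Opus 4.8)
The plan is to build a dictionary between the slice/open-set structure of $B_X$ and the corresponding $w^*$-structure of $B_{X^{**}}$, transporting information in both directions through Goldstine's theorem and the $w^*$-lower semicontinuity of the bidual norm. The cornerstone is the elementary observation that for any bounded set $A\subseteq X\subseteq X^{**}$ one has $\operatorname{diam}(A)=\operatorname{diam}(\overline{A}^{\,w^*})$, where the closure is taken in $X^{**}$: the inequality $\geq$ is immediate, and $\leq$ follows because, given $x^{**},y^{**}$ in the $w^*$-closure, one takes nets in $A$ converging to them, forms the product net, and applies $w^*$-lower semicontinuity of $\|\cdot\|$ to $x^{**}-y^{**}$. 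I would isolate this as a preliminary remark, since it is used verbatim in all three cases.

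For the direction $w^*$-$BDP\Rightarrow BDP$ (and likewise for $BHP$ and $BSCSP$), the argument is soft. A $w^*$-slice $S(B_{X^{**}},x^*,\alpha)$ is determined by $x^*\in S_{X^*}$, and since $\sup x^*(B_{X^{**}})=\sup x^*(B_X)=\|x^*\|$, its trace $S(B_{X^{**}},x^*,\alpha)\cap X$ is exactly the slice $S(B_X,x^*,\alpha)$ of $B_X$; it is nonempty because a nonempty $w^*$-open subset of $B_{X^{**}}$ must meet the $w^*$-dense set $B_X$ (Goldstine), and its diameter is at most that of the ambient $w^*$-slice. The same intersection-with-$X$ recipe turns a relatively $w^*$-open set into a relatively weakly open set (the defining functionals already live in $X^*$), and turns a convex combination of $w^*$-slices into the corresponding convex combination of slices of $B_X$, in each case without increasing the diameter and without losing nonemptiness. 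Hence small $w^*$-diameter downstairs forces small diameter upstairs.

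The reverse direction $BDP\Rightarrow w^*$-$BDP$ is where the preliminary remark does the work. Starting from a slice $S(B_X,x^*,\alpha)$ of small diameter, I would show $S(B_{X^{**}},x^*,\alpha)\subseteq\overline{S(B_X,x^*,\alpha)}^{\,w^*}$: given $z^{**}$ in the $w^*$-slice, $w^*$-continuity of $x^*$ together with Goldstine produces a net in $B_X$ converging $w^*$ to $z^{**}$ with $x^*$-values eventually exceeding $\|x^*\|-\alpha$, so its tail lies in $S(B_X,x^*,\alpha)$. The diameter remark then gives $\operatorname{diam}S(B_{X^{**}},x^*,\alpha)\le\operatorname{diam}S(B_X,x^*,\alpha)$. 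The $BHP$ case is identical with finitely many defining functionals in place of one. For $BSCSP$ one combines this slice-by-slice inclusion with the topological fact that $\sum_i\lambda_i\overline{A_i}^{\,w^*}\subseteq\overline{\sum_i\lambda_i A_i}^{\,w^*}$, valid because $(a_1,\dots,a_n)\mapsto\sum_i\lambda_i a_i$ is $w^*$-continuous and the product of $w^*$-closures is the $w^*$-closure of the product; this shows a convex combination of $w^*$-slices sits inside the $w^*$-closure of the matching convex combination of slices, and the diameter remark finishes the job.

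The only genuinely delicate points are (i) making sure nonemptiness is never lost when passing between $B_X$ and $B_{X^{**}}$, which is precisely what Goldstine guarantees, and (ii) the containment $\sum_i\lambda_i\overline{A_i}^{\,w^*}\subseteq\overline{\sum_i\lambda_i A_i}^{\,w^*}$ in the $BSCSP$ step, which is the one place the single-functional reasoning does not transfer mechanically and where I would be most careful. Everything else reduces to the two standing tools, Goldstine's density theorem and $w^*$-lower semicontinuity of the norm, applied uniformly across the three properties.
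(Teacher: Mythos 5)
Your argument is correct and complete: the diameter-preservation remark via $w^*$-lower semicontinuity of the bidual norm, the trace-on-$X$ argument for the downward implications, and the $w^*$-density of $S(B_X,x^*,\alpha)$ in $S(B_{X^{**}},x^*,\alpha)$ (together with $\sum_i\lambda_i\overline{A_i}^{\,w^*}\subseteq\overline{\sum_i\lambda_i A_i}^{\,w^*}$ for the $BSCSP$ case) all check out. The paper states this theorem without proof, citing \cite{BS}, and the proof given there follows essentially the same route through Goldstine's theorem, so there is nothing to add.
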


\begin{lemma}\label{densidad-principal}
	Let $X,Y$ be Banach spaces, and  $\Gamma \subset S_{X}$ with $B_X=\overline{co}(\Gamma )$ and $\Omega \subset S_{Y^*}$ a norming subset. We consider  $\Gamma '\subset S_{X^*}$ and $\Omega '\subset S_Y$ such that $\Vert x\Vert =\sup \{x^*(x):x^*\in \Gamma ' \}$ and  $\Vert y^*\Vert =\sup \{y^*(y):y\in \Omega ' \}$ for every $x\in \Gamma$ and $y^* \in \Omega$. If $H$ is a closed subspace of
	$L(X,Y)$ such that $\Gamma '\otimes \Omega '\subseteq H$, then 
	$B_{H^*}=\overline{co}^{w^*}(\Gamma \otimes \Omega )$.
\end{lemma}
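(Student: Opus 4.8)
The plan is to prove the two inclusions separately, the inclusion $\overline{co}^{w^*}(\Gamma\otimes\Omega)\subseteq B_{H^*}$ being routine and the reverse being the substance of the lemma. Throughout, an element $x\otimes y^*$ with $x\in\Gamma$ and $y^*\in\Omega$ is read as the functional on $H$ given by $(x\otimes y^*)(T)=y^*(Tx)$ for $T\in H$. First I would check that each such functional lies in $B_{H^*}$: since $x\in S_X$ and $y^*\in S_{Y^*}$, one has $|y^*(Tx)|\le\|T\|$ for every $T\in H$, so $\|x\otimes y^*\|_{H^*}\le1$. Because $B_{H^*}$ is convex and $w^*$-compact (Banach--Alaoglu), it contains the $w^*$-closed convex hull $K:=\overline{co}^{w^*}(\Gamma\otimes\Omega)$. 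I note in passing that the hypotheses that $\Gamma'\subset S_{X^*}$ and $\Omega'\subset S_Y$ are norming on $\Gamma$ and $\Omega$, together with $\Gamma'\otimes\Omega'\subseteq H$, upgrade this to $\|x\otimes y^*\|_{H^*}=1$: testing against the rank-one operator $x^*\otimes y\in H$ with $x^*\in\Gamma'$, $y\in\Omega'$ gives $(x\otimes y^*)(x^*\otimes y)=x^*(x)\,y^*(y)$, which can be pushed arbitrarily close to $1$.

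For the reverse inclusion I would argue by contradiction using $w^*$-separation. Suppose $\phi\in B_{H^*}\setminus K$. Since $K$ is $w^*$-closed and convex and $\{\phi\}$ is $w^*$-compact and disjoint from it, the Hahn--Banach separation theorem in the locally convex space $(H^*,w^*)$ produces a $w^*$-continuous functional, that is, an element $T\in H$, and a scalar $\alpha$ with
$$\phi(T)>\alpha\ge\psi(T)\qquad\text{for all }\psi\in K.$$
Specialising $\psi=x\otimes y^*$ over $x\in\Gamma$ and $y^*\in\Omega$ yields $y^*(Tx)\le\alpha$ for every such pair.

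The heart of the proof is to turn this one-sided estimate into a bound on the operator norm of $T$. Fix $y^*\in\Omega$ and consider $g:=T^*y^*=y^*\circ T\in X^*$. The displayed inequality says $g(x)\le\alpha$ for all $x\in\Gamma$; since $g$ is linear and continuous and $B_X=\overline{co}(\Gamma)$, this extends to $g(z)\le\alpha$ for all $z\in B_X$, and the symmetry of $B_X$ then forces $|g(z)|\le\alpha$, i.e. $\|T^*y^*\|\le\alpha$. Because $\Omega$ is norming for $Y$, for each $x$ we have $\|Tx\|=\sup_{y^*\in\Omega}|y^*(Tx)|$, whence
$$\|T\|=\sup_{x\in B_X}\|Tx\|=\sup_{y^*\in\Omega}\ \sup_{x\in B_X}|y^*(Tx)|=\sup_{y^*\in\Omega}\|T^*y^*\|\le\alpha.$$
Since $\|\phi\|\le1$ gives $\phi(T)\le\|T\|\le\alpha$, this contradicts $\phi(T)>\alpha$, so no such $\phi$ exists and $B_{H^*}\subseteq K$.

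The only delicate point is the step just described: the separating functional controls $y^*(Tx)$ only from above, so recovering the full norm $\|T\|$ requires exploiting that $\overline{co}(\Gamma)$ is the whole symmetric ball $B_X$ in order to convert the one-sided bound into $\|T^*y^*\|\le\alpha$, and then the norming property of $\Omega$ to pass from the functionals $T^*y^*$ back to $\|T\|$. Everything else is bookkeeping with Banach--Alaoglu and the separation theorem.
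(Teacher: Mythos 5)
Your proof is correct and follows essentially the same route as the paper's: both establish that $\Gamma\otimes\Omega$ norms $H$ (via the symmetry of $B_X=\overline{co}(\Gamma)$ and the norming property of $\Omega$, exactly the "delicate point" you isolate) and then conclude by Hahn--Banach separation in the $w^*$-topology. The paper merely compresses the separation step into one sentence, whereas you unfold it as an explicit contradiction argument; the content is identical.
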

\begin{proof} Since $B_X=\overline{co}(\Gamma )$ and $\Omega \subset S_{Y^*}$ is a norming subset, it is clear that
	$$ \Vert T\Vert =\sup \{y^*(T(x)):x\in \Gamma , \ y^*\in \Omega \},$$ for every $T\in H$. Thus, $\Gamma\otimes \Omega $ is a norming set for $H,$ where $(x\otimes y^*)(T):=y^*(T(x))$ and $\Vert x\otimes y^*\Vert =1$ since $\Gamma '\otimes \Omega '\subseteq H$, for $x\in \Gamma $ and $ y^*\in \Omega$. Hence, by Hahn-Banach separation theorem, $B_{H^*}=\overline{co}^{w^*}(S_{X^*}\otimes S_{Y^*}).$
\end{proof}

\begin{corollary} \label{densidad} Let $X,Y$ be Banach spaces. We consider $H$ a closed subspace of $L(X,Y)$ such that $X^*\otimes Y\subseteq H$. Then we have that $B_{H^*}=\overline{co}^{w^*}(S_X\otimes S_{Y^*})$.
\end{corollary}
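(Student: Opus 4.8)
The plan is to obtain this statement as a direct specialization of Lemma \ref{densidad-principal}, applied with the full spheres playing every role. Concretely, I would set $\Gamma = S_X$, $\Omega = S_{Y^*}$, $\Gamma' = S_{X^*}$ and $\Omega' = S_Y$. With these choices the target conclusion $B_{H^*} = \overline{co}^{w^*}(S_X \otimes S_{Y^*})$ is exactly $B_{H^*} = \overline{co}^{w^*}(\Gamma \otimes \Omega)$, so the entire argument reduces to checking that these four choices meet the hypotheses of the lemma; no new estimate is needed.

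The verification proceeds in three routine steps. First, the two outer hypotheses: $B_X = \overline{co}(S_X)$ is standard, and $\Omega = S_{Y^*}$ is a norming subset for $Y$ by Hahn--Banach, since $\|y\| = \sup\{|y^*(y)| : y^* \in S_{Y^*}\}$ for every $y \in Y$. Second, the two inner norming identities: for $x \in \Gamma = S_X$ a supporting functional from Hahn--Banach gives $\|x\| = \sup\{x^*(x) : x^* \in S_{X^*}\}$, which is precisely the condition imposed on $\Gamma' = S_{X^*}$; symmetrically, for $y^* \in \Omega = S_{Y^*}$ the definition of the dual norm gives $\|y^*\| = \sup\{y^*(y) : y \in S_Y\}$, which is the condition on $\Omega' = S_Y$. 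Third, the inclusion $\Gamma' \otimes \Omega' \subseteq H$: here I would invoke the standing hypothesis $X^* \otimes Y \subseteq H$, noting that $\Gamma' \otimes \Omega' = S_{X^*} \otimes S_Y$ is contained in $X^* \otimes Y$, so the inclusion is automatic. Once all hypotheses are in place, Lemma \ref{densidad-principal} yields the conclusion immediately.

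I expect no genuine obstacle here; the only points requiring care are bookkeeping. One must keep the four spheres in their correct roles (the primed spheres $S_{X^*}, S_Y$ feed the operator space $H$, while the unprimed spheres $S_X, S_{Y^*}$ produce the norming functionals on $H^*$ via the pairing $(x \otimes y^*)(T) = y^*(T(x))$), and one must notice that the inner norming identities in the lemma are stated without absolute values. The latter is harmless: for unit-sphere vectors and unit-norm functionals the relevant suprema are attained (or approached) with the correct sign, so dropping the absolute values changes nothing.
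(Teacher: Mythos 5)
Your proposal is correct and is exactly the intended derivation: the paper states this corollary without proof precisely because it is the specialization of Lemma \ref{densidad-principal} to $\Gamma = S_X$, $\Omega = S_{Y^*}$, $\Gamma' = S_{X^*}$, $\Omega' = S_Y$, with all hypotheses verified by Hahn--Banach and the inclusion $S_{X^*}\otimes S_Y \subseteq X^*\otimes Y \subseteq H$. Your remarks on the missing absolute values and on keeping the primed and unprimed spheres in their correct roles are accurate and harmless bookkeeping.
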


\begin{corollary}\label{inj lemma}
Let $X,Y$ be Banach spaces and $H$ be a closed subspace of $L(X^*,Y)$ such that $X\otimes Y\subset H.$ Then $B_{H^*}=\overline{co}^{w^*}(S_{X^*}\otimes S_{Y^*}).$
\end{corollary}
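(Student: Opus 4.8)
The plan is to obtain this corollary as a direct instance of Lemma \ref{densidad-principal}, in the same spirit as Corollary \ref{densidad}, but with the roles of the underlying spaces shifted: I would apply the lemma with $X^*$ playing the part of the first space and $Y$ kept as the second. Since Lemma \ref{densidad-principal} is stated for arbitrary norming data, the entire task reduces to choosing the four sets $\Gamma,\Omega,\Gamma',\Omega'$ appropriately and verifying that the three hypotheses of the lemma hold after this substitution.

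First I would set $\Gamma=S_{X^*}$ and $\Omega=S_{Y^*}$. The requirement $B_{X^*}=\overline{co}(\Gamma)$ is then immediate, because the norm-closed convex hull of the unit sphere is the whole unit ball, and $\Omega=S_{Y^*}$ is trivially a norming subset for $Y$. The decisive choice concerns the norming set $\Gamma'\subset S_{X^{**}}$ for the first space $X^*$: rather than taking all of $S_{X^{**}}$, I would take the canonical image of $S_X$. Indeed, for every $x^*\in S_{X^*}$ one has $\Vert x^*\Vert=\sup\{x^*(x):x\in S_X\}$, so $\Gamma':=S_X\subset S_{X^{**}}$ (via the canonical embedding) is norming for $X^*$. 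Finally I would set $\Omega'=S_Y$, which is norming for $Y^*$ by the very definition of the norm on $Y^*$.

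With these choices, the elementary tensor $x\otimes y$ with $x\in S_X$ and $y\in S_Y$, viewed inside $L(X^*,Y)$ as the operator $z^*\mapsto z^*(x)\,y$, is precisely an element of $\Gamma'\otimes\Omega'$. Hence $\Gamma'\otimes\Omega'=S_X\otimes S_Y\subseteq X\otimes Y\subseteq H$, which is exactly the containment hypothesis required to invoke Lemma \ref{densidad-principal}. Applying that lemma then yields $B_{H^*}=\overline{co}^{w^*}(\Gamma\otimes\Omega)=\overline{co}^{w^*}(S_{X^*}\otimes S_{Y^*})$, which is the desired conclusion.

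The only genuinely delicate point, and the step I would watch most carefully, is the identification of the correct norming set $\Gamma'$: using $S_X$ through the canonical embedding into $X^{**}$, instead of the full sphere $S_{X^{**}}$, is precisely what makes the modest hypothesis $X\otimes Y\subseteq H$ sufficient, since it guarantees that $\Gamma'\otimes\Omega'$ lands inside $X\otimes Y$ and hence inside $H$. Everything else is routine bookkeeping confirming that the substituted hypotheses of Lemma \ref{densidad-principal} are met.
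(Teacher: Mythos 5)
Your proposal is correct and is precisely the instantiation of Lemma \ref{densidad-principal} that the paper intends (the corollary is stated without proof as an immediate consequence of that lemma): taking $\Gamma=S_{X^*}$, $\Omega=S_{Y^*}$, $\Gamma'=S_X$ via the canonical embedding into $S_{X^{**}}$, and $\Omega'=S_Y$ verifies all hypotheses, and the key observation that $\Gamma'\otimes\Omega'=S_X\otimes S_Y\subseteq X\otimes Y\subseteq H$ is exactly what makes the weaker containment hypothesis suffice. No gaps.
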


The next result gives us  
stability of BDP in spaces of operators.

\begin{theorem}\label{teostrong2}
Let $X,Y$ be Banach spaces. Let $H$ be a closed subspace of $L(X,Y)$ such
that $X^*\otimes Y\subseteq H$. Then  $H^*$ has $w^*$-$BDP$ if and only if $X^{**}$ and $Y^*$ have $w^*$-$BDP.$
\end{theorem}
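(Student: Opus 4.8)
The plan is to run both implications through the description $B_{H^*}=\overline{co}^{w^*}(S_X\otimes S_{Y^*})$ given by Corollary \ref{densidad}, where the action is $(x\otimes y^*)(T)=y^*(T(x))$. Two elementary facts are used throughout. First, $\|x\otimes y^*\|_{H^*}\le\|x\|\,\|y^*\|$. Second, and here the hypothesis $X^*\otimes Y\subseteq H$ is essential: for fixed $y_0^*\in S_{Y^*}$ the map $x\mapsto x\otimes y_0^*$ is an isometry of $X$ into $H^*$, and for fixed $x_0\in S_X$ the map $y^*\mapsto x_0\otimes y^*$ is an isometry of $Y^*$ into $H^*$. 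Indeed, given $z\in X$ one chooses a norming $z^*\in S_{X^*}$ and a $w\in S_Y$ with $y_0^*(w)$ close to $1$; the rank-one operator $z^*\otimes w\in X^*\otimes Y\subseteq H$ has norm $1$ and $(z\otimes y_0^*)(z^*\otimes w)=z^*(z)\,y_0^*(w)$ is close to $\|z\|$, giving the reverse of the trivial inequality. Finally, $\sup_{B_{H^*}}\widehat T=\|T\|$ for every $T\in H$.

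For the \emph{only if} direction, assume $H^*$ has $w^*$-$BDP$ and fix $\varepsilon>0$. Take a $w^*$-slice $S(B_{H^*},T,\alpha)$ of diameter $<\varepsilon$ with $\|T\|=1$ and pick $x_0\in S_X$, $y_0^*\in S_{Y^*}$ with $y_0^*(T(x_0))>1-\alpha/2$. Put $x^*:=T^*y_0^*\in B_{X^*}$, so that $x^*(x)=y_0^*(T(x))$; then every $x\in B_X$ with $x^*(x)>1-\alpha$ yields $x\otimes y_0^*\in S(B_{H^*},T,\alpha)$, and since $x\mapsto x\otimes y_0^*$ is an isometry the set $\{x\in B_X:x^*(x)>1-\alpha\}$ has diameter $<\varepsilon$. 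By Goldstine's theorem and the $w^*$-continuity of $x^*$, the set $\{\xi\in B_{X^{**}}:x^*(\xi)>1-\alpha\}$ is contained in the $w^*$-closure of the previous one and hence has the same diameter; as it contains a genuine $w^*$-slice of $B_{X^{**}}$ (after normalising $x^*$), $X^{**}$ has $w^*$-$BDP$. The same scheme, now using the isometry $y^*\mapsto x_0\otimes y^*$ and the functional $T(x_0)\in Y$, produces $w^*$-slices of $B_{Y^*}$ of diameter $<\varepsilon$, so $Y^*$ has $w^*$-$BDP$.

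For the \emph{if} direction, assume $X^{**}$ and $Y^*$ have $w^*$-$BDP$ and fix $\varepsilon>0$. Choose $x^*\in S_{X^*}$, $y\in S_Y$ and $\delta>0$ with $\mathrm{diam}\,S(B_{X^{**}},x^*,\delta)<\varepsilon$ and $\mathrm{diam}\,S(B_{Y^*},y,\delta)<\varepsilon$, set $T:=x^*\otimes y\in X^*\otimes Y\subseteq H$ (so $\|T\|=1$ and $\sup_{B_{H^*}}\widehat T=1$) and consider $S:=S(B_{H^*},T,\alpha)$ with $\alpha:=\delta^2$. Here $(x\otimes y^*)(T)=x^*(x)\,y^*(y)$ is the key identity. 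Given $\phi\in S$, approximate it $w^*$ by convex combinations $\sum_i\lambda_i\,x_i\otimes y_i^*$ which, $\widehat T$ being $w^*$-continuous, may be assumed to satisfy $\sum_i\lambda_i\,x^*(x_i)\,y_i^*(y)>1-\alpha$. Call an index good if $x^*(x_i)\,y_i^*(y)>1-\delta$; a Markov-type estimate bounds the total weight of the bad indices by $\delta$, while for a good index both $x^*(x_i)>1-\delta$ and $y_i^*(y)>1-\delta$, so $x_i\in S(B_X,x^*,\delta)\subseteq S(B_{X^{**}},x^*,\delta)$ and $y_i^*\in S(B_{Y^*},y,\delta)$. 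Using $\|x\otimes y^*\|_{H^*}\le\|x\|\,\|y^*\|$, any two good operators differ by less than $\mathrm{diam}\,S(B_{X^{**}},x^*,\delta)+\mathrm{diam}\,S(B_{Y^*},y,\delta)<2\varepsilon$, hence they all lie within $2\varepsilon$ of a fixed $g_0:=x_0\otimes y_0^*$ taken from these two slices. Thus each admissible combination is within $2\varepsilon+2\delta$ of $g_0$, and by $w^*$-lower semicontinuity of the norm so is $\phi$; since $g_0$ does not depend on $\phi$, $\mathrm{diam}\,S\le 2(2\varepsilon+2\delta)$, which is arbitrarily small. Therefore $H^*$ has $w^*$-$BDP$.

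The routine ingredients are the norm identities and the two isometric embeddings used in the \emph{only if} direction. The main obstacle is the \emph{if} direction: since $B_{H^*}$ is only the $w^*$-closed convex hull of $S_X\otimes S_{Y^*}$, a generic point of the slice is a $w^*$-limit of genuine convex combinations and cannot be treated factorwise, so one must control the diameter through the good/bad splitting and transfer the resulting bound to the limit by lower semicontinuity of the norm, the scaling $\alpha=\delta^2$ being what makes the bad weight negligible.
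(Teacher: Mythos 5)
Your proposal is correct and follows essentially the same route as the paper: both directions go through the description $B_{H^*}=\overline{co}^{w^*}(S_X\otimes S_{Y^*})$, the \emph{only if} part by embedding slices of $B_X$ and $B_{Y^*}$ (via $x\mapsto x\otimes y_0^*$ and $y^*\mapsto x_0\otimes y^*$) into the small slice of $B_{H^*}$, and the \emph{if} part by slicing with $f\otimes y$ at width $\delta^2$ and splitting convex combinations into good and bad indices with a Markov estimate. Your write-up is in fact slightly more careful than the paper's at two points it leaves implicit, namely the Goldstine transfer from slices of $B_X$ to $w^*$-slices of $B_{X^{**}}$ and the $w^*$-lower-semicontinuity step extending the diameter bound from convex combinations to arbitrary elements of the slice.
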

\begin{proof}
Let us assume that $H^*$ has the $w^*$-$BDP.$ Let $\varepsilon >0$. Then there exist $T\in S_H$ and $\alpha >0$ such that  $diam (S(B_{H^*},T,\alpha ))<\varepsilon .$ By Corollary \ref{densidad}, there exist $x_0\in S_X, \ y^*_0\in S_{Y^*}$ such that $y^*_0(T(x_0))>1-\alpha$. We define the slices $S_1:=\{x\in B_X:y^*_0(T(x))>1-\alpha\}$ and $S_2:=\{y^*\in B_{Y^*}:y^*(T(x_0))>1-\alpha\}$. It is clear that $S_1\otimes y^*_0, x_0\otimes S_2\subset S(B_{H^*},T,\alpha )$, and hence $diam (S_1)<\varepsilon$ and $diam (S_2)<\varepsilon$. Then $X^{**}$ and $Y^*$ have $w^*$-$BDP.$ 
Now, we assume that  $X^{**}$ and $Y^*$ have $w^*$-$BDP.$ Let $\varepsilon >0$. Then there exist $f\in S_{X^*}$, $y\in S_Y$ and $\alpha >0$ such that  $$diam (S(B_X, f, \alpha))< \varepsilon \  \text{and} \  diam (S(B_{Y^*}, y, \alpha ))< \varepsilon . $$ We prove that $diam (S(B_{H^*}, f \otimes y, \delta^2 ))<  8 \varepsilon, $ where $0<\delta<\min\{\alpha, \varepsilon\}.$ Given  $z \in S(B_{H^*}, f \otimes y, \delta^2)$, we can assume that $$z= \sum_{j=1}^n \lambda_j x_j \otimes y_j^*$$ where $x_j \in B_X$, $y_j^* \in B_{Y^*}$ $\lambda_j \geq 0$ and $\sum_{j=1}^n \lambda_j=1$. We fix elements $x_0\in S(B_X, f, \alpha)$ and $y_0^*\in S(B_{Y^*}, y, \alpha )$.
We denote by
$$I= \{ j : 1 \leq j \leq n  , f(x_j)y(y_j^*)< 1-\delta  \} \qquad I'= \{1,...,n\} \backslash I$$

Then we have

\begin{equation} \notag
\begin{split}
1 - \delta^2  \leq (f \otimes y) (z) = 
   \sum_{j \in I} \lambda_j f(x_j) y(y_j^*) + \sum_{j \in I'} \lambda_j f(x_j) y(y_j^*)  \\
\leq  (1-\delta) \sum_{j \in I} \lambda_j + \sum_{j \in I'} \lambda_j = (1-\delta) \sum_{j \in I} \lambda_j + 1 - \sum_{j \in I} \lambda_j = 1 - \delta \sum_{j \in I} \lambda_j
\end{split}
\end{equation}

and so $\sum_{j \in I} \lambda_j < \delta$ and $  1 - \delta < \sum_{j \in I'} \lambda_j .$ We conclude that

\begin{equation} \notag
\begin{split}
\Vert z - \sum_{j \in I'} \lambda_j x_j \otimes y_j^*\Vert  = \Vert \sum_{j \in I \cup I' } \lambda_j x_j \otimes y_j^* - \sum_{j \in I'} \lambda_j x_j \otimes y_j^*\Vert  \\   = \Vert \sum_{j \in I } \lambda_j x_j \otimes y_j^*\Vert  \leq \sum_{j \in I } \lambda_j \Vert x_j\Vert \Vert y_j^*\Vert  \leq \sum_{j \in I } \lambda_j ,
\end{split}
\end{equation}

and hence

\begin{equation} \label{eq4}
\Vert z - \sum_{j \in I'} \lambda_j x_j \otimes y_j\Vert < \delta
\end{equation}

Since  $f(x_j)y(y_j^*) \geq 1- \delta $ for all $j \in I'$, we have that $f(x_j) \geq 1-\delta $ and $y(y_j^*) \geq 1-\delta $ for all $j \in I'$. Hence
 $\Vert x_0-x_j\Vert <\varepsilon$ and $\Vert y_0^*-y_j^*\Vert <\varepsilon$ for all $j \in I'$.

As a consequence

\begin{equation} \notag
\begin{split}
\Vert z-x_0\otimes y_0^*\Vert = \Vert \sum_{j=1}^n \lambda_j (x_j \otimes y_j^*-x_0\otimes y_0^*)\Vert      \leq 2 \delta + \Vert \sum_{j \in I' } \lambda_j (x_j \otimes y_j^*-x_0\otimes y_0^*)\Vert  \\  \leq 2 \delta + \Vert \sum_{j \in I' } \lambda_j (x_j \otimes y_j^*+x_0 \otimes y_j^*-x_0 \otimes y_j^*-x_0\otimes y_0^*)\Vert  \\  \leq 2 \delta +  \sum_{j \in I' } \lambda_j (\Vert (x_j -x_0)\otimes y_j^*\Vert + \Vert x_0 \otimes (y_j^*- y_0^*)\Vert ) \leq 2\delta +  \sum_{j \in I' } \lambda_j 2\varepsilon \leq 4\varepsilon
\end{split}
\end{equation}

We conclude that $diam (S(B_{H^*}, f \otimes y, \delta^2 ))< 8\varepsilon $.
\end{proof}

\begin{corollary}\label{BDP}
Let $X,Y$ be Banach spaces. Let $H$ be a closed subspace of $L(X,Y)$ such
that $X^*\otimes Y\subseteq H$. Then norm of $H$ is non-rough if and only if norm of $X^*$ and $Y$ are non-rough.
\end{corollary}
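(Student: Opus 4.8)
The plan is to observe that this corollary is an immediate consequence of Theorem \ref{teostrong2} combined with the characterization of non-roughness in Lemma \ref{rough}, so that essentially no new estimate is required beyond careful bookkeeping of which space plays which role. The whole argument will be a chain of equivalences.

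First I would apply Lemma \ref{rough} directly to the Banach space $H$: its norm is non-rough if and only if $H^*$ has $w^*$-$BDP$. This reduces the left-hand side of the stated equivalence to a statement about $H^*$. Next I would invoke Theorem \ref{teostrong2}, whose hypotheses are precisely the standing assumptions of the corollary (namely that $H$ is a closed subspace of $L(X,Y)$ with $X^*\otimes Y\subseteq H$); this gives that $H^*$ has $w^*$-$BDP$ if and only if both $X^{**}$ and $Y^*$ have $w^*$-$BDP$.

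Finally I would translate the right-hand side back into the language of roughness by two further applications of Lemma \ref{rough}. Applying it to the space $X^*$ yields that $X^*$ is non-rough if and only if $(X^*)^*=X^{**}$ has $w^*$-$BDP$, while applying it to $Y$ yields that $Y$ is non-rough if and only if $Y^*$ has $w^*$-$BDP$. Concatenating the three equivalences then gives that $H$ is non-rough if and only if $X^*$ and $Y$ are both non-rough, which is exactly the assertion.

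The only point requiring attention, and the closest thing to an obstacle, is the indexing in the last step: one must apply Lemma \ref{rough} to the space $X^*$ rather than to $X$, so that the dual appearing is the bidual $X^{**}$ matching the term in Theorem \ref{teostrong2}, and not $X^*$ itself. Once this matching of spaces is set up correctly, the proof is a routine composition of the three cited results, with no remaining computation.
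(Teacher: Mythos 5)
Your proposal is correct and is exactly the argument the paper intends: the corollary is stated without proof immediately after Theorem \ref{teostrong2}, and the intended derivation is the chain $H$ non-rough $\Leftrightarrow$ $H^*$ has $w^*$-$BDP$ $\Leftrightarrow$ $X^{**}$ and $Y^*$ have $w^*$-$BDP$ $\Leftrightarrow$ $X^*$ and $Y$ non-rough, via Lemma \ref{rough}. Your remark about applying Lemma \ref{rough} to $X^*$ rather than $X$ is the right bookkeeping point.
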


Since the dual of the projective tensor product of Banach spaces $X$ and $Y$ is the space of operators $L(X,Y^*)$ we have the following :

\begin{corollary}\label{SD2P}
The projective tensor product,  $X\widehat{\otimes}_\pi Y$, of two
Banach spaces $X$ and $Y$, has BDP if and only if $X$ and $Y$
have BDP.
\end{corollary}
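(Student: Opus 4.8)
The plan is to deduce this purely formally from Theorem \ref{teostrong2} and the bidual characterisation of Theorem \ref{bs 1st}, exploiting the duality $(X\widehat{\otimes}_\pi Y)^* = L(X,Y^*)$ recorded in the introduction. Write $Z = X\widehat{\otimes}_\pi Y$, so that $Z^* = L(X,Y^*)$ and hence $Z^{**} = L(X,Y^*)^*$. The whole argument is then a chain of equivalences that transports the property BDP across two layers of duality.

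First I would apply Theorem \ref{teostrong2} with the target space $Y$ replaced by $Y^*$ and with $H = L(X,Y^*)$ taken to be the full space of operators. The hypothesis of that theorem requires $X^* \otimes Y^* \subseteq H$, which is immediate since every element of $X^* \otimes Y^*$ is a finite-rank, hence bounded, linear operator from $X$ into $Y^*$. The theorem then yields that $Z^{**} = L(X,Y^*)^*$ has $w^*$-$BDP$ if and only if $X^{**}$ and $(Y^*)^* = Y^{**}$ both have $w^*$-$BDP$. Next I would invoke Theorem \ref{bs 1st} three separate times to convert each space into the $w^*$-$BDP$ of its bidual: $Z$ has BDP iff $Z^{**}$ has $w^*$-$BDP$, $X$ has BDP iff $X^{**}$ has $w^*$-$BDP$, and $Y$ has BDP iff $Y^{**}$ has $w^*$-$BDP$. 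Stringing these together with the previous step gives that $Z$ has BDP iff $X^{**}$ and $Y^{**}$ have $w^*$-$BDP$ iff $X$ and $Y$ have BDP, which is exactly the claimed equivalence.

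I do not expect a genuine obstacle here, since the statement is a corollary and the content is entirely carried by Theorem \ref{teostrong2}. The only point demanding care is the index bookkeeping: one must apply Theorem \ref{teostrong2} to operators into $Y^*$ rather than into $Y$, so that its inclusion hypothesis reads $X^* \otimes Y^* \subseteq L(X,Y^*)$ and its conclusion produces the \emph{second} bidual $Y^{**}$ (via $(Y^*)^* = Y^{**}$). Keeping this matching of duals straight, and then routing each of the three biduals through Theorem \ref{bs 1st}, is the entirety of the work.
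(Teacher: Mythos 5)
Your argument is correct and coincides with the paper's own proof: the authors likewise identify $(X\widehat{\otimes}_\pi Y)^{*}$ with $L(X,Y^{*})$, apply Theorem \ref{teostrong2} (with target space $Y^{*}$) to get that $(X\widehat{\otimes}_\pi Y)^{**}$ has $w^*$-$BDP$ if and only if $X^{**}$ and $Y^{**}$ do, and then pass back through Theorem \ref{bs 1st}. Your explicit bookkeeping of the substitution $Y\mapsto Y^{*}$ and the inclusion $X^{*}\otimes Y^{*}\subseteq L(X,Y^{*})$ is exactly the detail the paper leaves implicit.
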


\begin{proof}
Taking into account that the dual of the projective tensor product $X\widehat{\otimes}_\pi Y$ is $L(X,Y^*)$ and Theorem $\ref{teostrong2},$ we can conclude $(X\widehat{\otimes}_\pi Y)^{**}$ has $w^*$-$BDP$ if and only if $X^{**}$ and $Y^{**}$ have $w^*$-$BDP.$ Applying Theorem $\ref{bs 1st}$ we get our desired result.
\end{proof}

\begin{remark}
The following example shows that the assumptions cannot be weakened, in Corollary \ref{BDP}. Indeed, Let  $Z:=C(K)\widehat{\otimes}_\pi \ell_2$,
for  any infinite compact Hausdorff topological space $K$. By \cite[Theorem 4.1]{ABGRP}, every slice of the unit ball of $Z$ has diameter two even though $\ell_2$ has BDP.
\end{remark}

Another consequence of the Lemma \ref{rough} and the proof of Theorem \ref{teostrong2}  is the following version of the result \cite[Theorem 1.8]{RS}.

\begin{corollary}\label{Frechet}
	Let $X,Y$ be Banach spaces. Let  $H$ be a closed subspace of $L(X,Y)$ such
	that $X^*\otimes Y\subseteq H$. Let $x_0^*\otimes y_0\in S_{X^*}\otimes S_Y$. Then  the norm of $H$ is Fr\'echet differentiable at $x_0^*\otimes y_0$  if and only if the norms of $X^*$ and $Y$ are Fr\'echet differentiable at $x_0^*$ and $y_0$, respectively.
\end{corollary}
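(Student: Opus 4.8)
The plan is to reduce all three Fréchet-differentiability statements to statements about the diameters of $w^*$-slices, and then to transport these between $H^*$, $X$ (resp.\ $X^{**}$) and $Y^*$ using exactly the slice computations already carried out in the proof of Theorem \ref{teostrong2}. The bridge is the pointwise Šmulyan criterion: for a Banach space $Z$ and $z_0\in S_Z$, the identity $\eta(Z,z_0)=\lim_{\alpha\to 0^+}\mathrm{diam}\,S(B_{Z^*},z_0,\alpha)$ — the pointwise counterpart of Lemma \ref{rough} — combined with the result of \cite{DGZ} quoted above, shows that the norm of $Z$ is Fréchet differentiable at $z_0$ if and only if $\lim_{\alpha\to0^+}\mathrm{diam}\,S(B_{Z^*},z_0,\alpha)=0$. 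Applying this with $Z=H$ and $z_0=T_0:=x_0^*\otimes y_0\in S_H$ (note $T_0\in X^*\otimes Y\subseteq H$), with $Z=Y$ and $z_0=y_0$, and with $Z=X^*$ and $z_0=x_0^*$, I reduce the claim to the equivalence
$$\lim_{\alpha\to0^+}\mathrm{diam}\,S(B_{H^*},T_0,\alpha)=0 \iff \lim_{\alpha\to0^+}\mathrm{diam}\,S(B_X,x_0^*,\alpha)=0 \ \text{and}\ \lim_{\alpha\to0^+}\mathrm{diam}\,S(B_{Y^*},y_0,\alpha)=0.$$
Here I use $B_{H^*}=\overline{co}^{w^*}(S_X\otimes S_{Y^*})$ from Corollary \ref{densidad}, with $(x\otimes y^*)(T)=y^*(Tx)$. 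For the factor $X^*$ the relevant slices a priori live in $B_{X^{**}}$; since $x_0^*$ is $w^*$-continuous on $X^{**}$ and $B_X$ is $w^*$-dense in $B_{X^{**}}$ (Goldstine), $S(B_X,x_0^*,\alpha)$ is $w^*$-dense in $S(B_{X^{**}},x_0^*,\alpha)$, and $w^*$-lower semicontinuity of the norm makes the two slices have the same diameter — this is why slices of $B_X$ (rather than of $B_{X^{**}}$), as in Theorem \ref{teostrong2}, are the correct object.

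For the sufficiency, assume the two factor-slice diameters tend to $0$. Fixing $\varepsilon>0$ I choose $\alpha>0$ with $\mathrm{diam}\,S(B_X,x_0^*,\alpha)<\varepsilon$ and $\mathrm{diam}\,S(B_{Y^*},y_0,\alpha)<\varepsilon$. This is precisely the hypothesis in the second half of the proof of Theorem \ref{teostrong2} with $f=x_0^*$ and $y=y_0$; running that computation verbatim yields $\mathrm{diam}\,S(B_{H^*},x_0^*\otimes y_0,\delta^2)<8\varepsilon$ for any $0<\delta<\min\{\alpha,\varepsilon\}$. Letting $\varepsilon\to0$ gives $\lim_{\beta\to0^+}\mathrm{diam}\,S(B_{H^*},T_0,\beta)=0$, i.e.\ the norm of $H$ is Fréchet differentiable at $T_0$.

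For the necessity, assume $\mathrm{diam}\,S(B_{H^*},T_0,\alpha)\to0$, and transport small diameter into each factor by isometric slice embeddings — the localization of the first half of the proof of Theorem \ref{teostrong2}. I pick $y_0^*\in S_{Y^*}$ with $y_0^*(y_0)=1$ (Hahn--Banach); then for $x\in S(B_X,x_0^*,\alpha)$ one has $(x\otimes y_0^*)(T_0)=x_0^*(x)y_0^*(y_0)=x_0^*(x)>1-\alpha$, so $x\otimes y_0^*\in S(B_{H^*},T_0,\alpha)$, and testing against $g\otimes y_1\in X^*\otimes Y\subseteq H$ with $g\in S_{X^*}$, $y_1\in S_Y$, $y_0^*(y_1)$ close to $1$ gives $\|x\otimes y_0^*-x'\otimes y_0^*\|_{H^*}=\|x-x'\|_X$; hence $\mathrm{diam}\,S(B_X,x_0^*,\alpha)\le\mathrm{diam}\,S(B_{H^*},T_0,\alpha)\to0$. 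Symmetrically, choosing $x_0\in S_X$ with $x_0^*(x_0)>1-\alpha$ and using $y^*\mapsto x_0\otimes y^*$ places $S(B_{Y^*},y_0,\alpha)$ inside $S(B_{H^*},T_0,2\alpha)$ and satisfies $\|x_0\otimes(y^*-y'^*)\|_{H^*}=\|y^*-y'^*\|_{Y^*}$ (now testing against $g\otimes v$ with $g(x_0)$ close to $1$), so $\mathrm{diam}\,S(B_{Y^*},y_0,\alpha)\le\mathrm{diam}\,S(B_{H^*},T_0,2\alpha)\to0$. These are the two factor conditions.

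The routine checks are the two norm identities $\|x\otimes y_0^*-x'\otimes y_0^*\|_{H^*}=\|x-x'\|_X$ and $\|x_0\otimes(y^*-y'^*)\|_{H^*}=\|y^*-y'^*\|_{Y^*}$, whose lower bounds use that $X^*\otimes Y\subseteq H$ supplies enough rank-one test operators; this is exactly where the hypothesis on $H$ enters, as in Theorem \ref{teostrong2}. The only genuine subtlety I anticipate is bookkeeping at the fixed point: since $\|x_0^*\|$ need not be attained, $S(B_{Y^*},y_0,\alpha)$ embeds into $S(B_{H^*},T_0,2\alpha)$ rather than $S(B_{H^*},T_0,\alpha)$, so one must track the (harmless) factor-of-two shift in the slice parameter, and for the factor $X^*$ one must keep in mind that Fréchet differentiability of its norm at $x_0^*$ is governed by slices of $B_{X^{**}}$, which the Goldstine-density step identifies with slices of $B_X$. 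Neither affects the limits as $\alpha\to0^+$, so the equivalence follows.
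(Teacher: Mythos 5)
Your proposal is correct and is essentially the paper's own argument: the paper gives no explicit proof, saying only that the corollary is a consequence of Lemma \ref{rough} and the proof of Theorem \ref{teostrong2}, and your localization via the pointwise \v{S}mulyan criterion --- Fr\'echet differentiability at a point iff the corresponding dual slices determined by that point have diameter tending to zero --- combined with the verbatim slice estimate from the second half of that proof (sufficiency) and the rank-one isometric embeddings $x\mapsto x\otimes y_0^*$, $y^*\mapsto x_0\otimes y^*$ (necessity) is exactly how that reduction is meant to run. The two points you flag, the identification of slices of $B_{X}$ with slices of $B_{X^{**}}$ via Goldstine and the harmless factor-of-two shift in the slice parameter when $x_0^*$ does not attain its norm, are precisely the details that need attention and you handle them correctly.
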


Our next result is a necessary condition for the dual of a space of
operators to have $w^*$-$BSCSP.$

\begin{proposition}\label{necesaria} Let $X$ and $Y$ be Banach spaces and $H$ be a closed subspace of $L(X,Y)$ such that $X^* \otimes Y\subset H.$ Also assume that $X$ has $BDP$ and $Y^*$ has $w^*$-$BSCSP,$ then $H^*$ has $w^*$-$BSCSP.$
\end{proposition}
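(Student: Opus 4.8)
The plan is to combine the Ball Dentable Property of $X$ with the $w^*$-Small Combination of Slices Property of $Y^*$, working inside $B_{H^*}=\overline{co}^{w^*}(S_X\otimes S_{Y^*})$, which is available from Corollary \ref{densidad}. Fix $\varepsilon>0$. Using the $BDP$ of $X$, I would first choose $f\in S_{X^*}$ and $\alpha>0$ with $diam(S(B_X,f,\alpha))<\varepsilon$, and fix a point $x_0$ of this slice. Using the $w^*$-$BSCSP$ of $Y^*$, I would choose $y_1,\dots,y_m\in S_Y$, parameters $\beta_1,\dots,\beta_m>0$ and weights $\mu_1,\dots,\mu_m\geq 0$ with $\sum_k\mu_k=1$ so that the convex combination of $w^*$-slices $C:=\sum_{k=1}^m\mu_k\,S(B_{Y^*},y_k,\beta_k)$ satisfies $diam(C)<\varepsilon$. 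The candidate small convex combination of $w^*$-slices of $B_{H^*}$ is then $\mathcal S:=\sum_{k=1}^m\mu_k\,S(B_{H^*},f\otimes y_k,\delta)$ for a suitably small $\delta$; here each determining functional $f\otimes y_k$ lies in $X^*\otimes Y\subseteq H$ and has operator norm $1$, so these are legitimate $w^*$-slices.

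The core of the argument is a concentration estimate for a single slice, carried out exactly as in the proof of Theorem \ref{teostrong2}. Given $z_k\in S(B_{H^*},f\otimes y_k,\delta)$, I would use that convex combinations of elements of $S_X\otimes S_{Y^*}$ are $w^*$-dense in the relatively $w^*$-open slice to assume $z_k=\sum_j\lambda_j\,x_j\otimes y_j^*$ is a finite convex combination with $x_j\in S_X$ and $y_j^*\in S_{Y^*}$. Splitting the index set according to whether $f(x_j)\,y_k(y_j^*)$ is close to $1$ and discarding the small-weight ``bad'' part (of total weight $O(\delta)$), the surviving indices satisfy $f(x_j)\geq 1-\delta$ and $y_k(y_j^*)\geq 1-\delta$; hence $x_j\in S(B_X,f,\alpha)$, so $\Vert x_j-x_0\Vert<\varepsilon$, while $y_j^*\in S(B_{Y^*},y_k,\beta_k)$. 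Replacing each surviving $x_j$ by $x_0$ and renormalizing the surviving $y_j^*$ to a convex combination $v_k^*\in S(B_{Y^*},y_k,\beta_k)$ (the slice being convex), and using $\Vert x\otimes y^*\Vert_{H^*}\leq\Vert x\Vert\,\Vert y^*\Vert$, I obtain $\Vert z_k-x_0\otimes v_k^*\Vert< c(\delta+\varepsilon)$ for an absolute constant $c$.

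With this in hand, any $z=\sum_k\mu_k z_k\in\mathcal S$ satisfies $\Vert z-x_0\otimes w^*\Vert< c(\delta+\varepsilon)$ with $w^*:=\sum_k\mu_k v_k^*\in C$. Since $\Vert x_0\otimes(w^*-(w^*)')\Vert_{H^*}\leq\Vert w^*-(w^*)'\Vert\leq diam(C)<\varepsilon$, two elements $z,z'\in\mathcal S$ lie within $2c(\delta+\varepsilon)+\varepsilon$ of each other, so $diam(\mathcal S)$ is $O(\varepsilon)$. The passage from finite combinations to all of $\mathcal S$ is justified by the $w^*$-lower semicontinuity of the norm. Choosing $\delta<\varepsilon$ small and letting $\varepsilon\to 0$ shows that $B_{H^*}$ admits convex combinations of $w^*$-slices of arbitrarily small diameter, that is, $H^*$ has $w^*$-$BSCSP$.

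The main obstacle I anticipate is organizing the concentration estimate so that the errors coming from the two factors do not interact badly: the $X$-side must collapse to the \emph{single} fixed point $x_0$ (this is where the full $BDP$ of $X$ is used), whereas the $Y^*$-side only collapses to an element of the small set $C$ (reflecting that $Y^*$ has merely $w^*$-$BSCSP$), and one must check that the renormalization of the good part and the averaging $\sum_k\mu_k z_k$ are uniform in $k$. A secondary technical point is the reduction to finite convex combinations within each slice together with the $w^*$-lower semicontinuity of the norm, which is what legitimately transfers the diameter bound from finite combinations to the whole convex combination of slices.
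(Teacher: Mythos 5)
Your proposal is correct and follows essentially the same route as the paper's proof: a small slice of $B_X$ from $BDP$, a small convex combination of $w^*$-slices of $B_{Y^*}$ from $w^*$-$BSCSP$, the candidate combination $\sum_k\mu_k S(B_{H^*},f\otimes y_k,\cdot)$, and the good/bad index split after approximating by finite convex combinations of elementary tensors via Corollary \ref{densidad}. The only differences are cosmetic --- you transfer the estimate from the $w^*$-dense finite combinations to all of $\mathcal S$ by $w^*$-lower semicontinuity of the norm where the paper uses an almost-norming functional $h$ together with $w^*$-neighbourhoods, and (as the paper's choice of slice width $\beta^2$ against threshold $1-\beta$ shows) you should calibrate the slice width to be of order $\delta^2$ so that the discarded weight is genuinely $O(\delta)$.
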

\begin{proof}
Let $\varepsilon>0.$ Since $X$ has $BDP$ and $Y^*$ has $w^*$-$BSCSP$, then let\\ $S(B_X, x^*,\alpha)$ be a slice in $B_X$ with diameter less than $\varepsilon$ and\\ $\sum_{i=1}^{n} \lambda_i S(B_{Y^*},y_i,\delta)$ be a convex combination of slices in $B_{Y^*}$ with diameter less than $\varepsilon.$ Choose $\beta$ such that $0< 2\beta <\min\{\delta,\alpha,\varepsilon\}.$ Also choose $x_0\in S(B_X, x^*,\alpha) \bigcap S_X.$ Consider $\sum_{i=1}^{n} \lambda_i S(B_{H^*},x^*\otimes y_i,\beta^2),$ a convex combination of $w^*$ slices in $B_{H^*}.$ 
Let $ \sum_{i=1}^{n} \lambda_i f_i', \sum_{i=1}^{n} \lambda_i g_i' \in \sum_{i=1}^{n} \lambda_i S(B_{H^*},x^*\otimes y_i,\beta^2).$ Since $H$ is closed subspace of $L(X,Y)$ with $X^* \otimes Y\subset H$ then 
 $B_{H^*}=\overline{co}^{w^*} (S_X\otimes S_{Y^*})$ (by 
 Lemma $\ref{densidad} $) 
Choose $h\in S_H$ such that $$\Big | \sum_{i=1}^{n} \lambda_i f_i'(h)-\sum_{i=1}^{n} \lambda_i g_i'(h)\Big |>\|\sum_{i=1}^{n} \lambda_i f_i'-\sum_{i=1}^{n} \lambda_i g_i'\|-\varepsilon.$$ Since $co(S_X\otimes S_{Y^*})$ is $w^*$dense in $B_{H^*},$  for each $i=1,2,\ldots,n$  
$$co(S_X\otimes S_{Y^*}) \cap \Big (W(f_i',h,\varepsilon)\cap S(B_{H^*},x^*\otimes y_i,\beta^2)\Big ) \neq \emptyset $$ 
$$co(S_X\otimes S_{Y^*}) \cap \Big (W(g_i',h,\varepsilon)\cap S(B_{H^*},x^*\otimes y_i,\beta^2)\Big ) \neq \emptyset $$  
For all $i=1,2,\ldots,n$ we choose
 $$f_i= \sum_{k=1}^{m_i} \gamma_{(k,i)} x_{(k,i)} \otimes y^*_{(k,i)} \in co(S_X\otimes S_{Y^*}) \cap \Big (W(f_i',h,\varepsilon)\cap S(B_{H^*},x^*\otimes y_i,\beta^2)\Big ) $$  where $0<\gamma_{(k,i)}\leqslant 1$ with $\sum_{k=1}^{m_i} \gamma_{(k,i)}=1$ and  $x_{(k,i)} \in S_X, y^*_{(k,i)}\in S_{Y^*}$  $\forall k=1,2,\ldots,m_i$
 $$g_i= \sum_{k=1}^{n_i} \gamma'_{(k,i)} u_{(k,i)} \otimes v^*_{(k,i)}  \in co(S_X\otimes S_{Y^*}) \cap \Big (W(g_i',h,\varepsilon)\cap S(B_{H^*},x^*\otimes y_i,\beta^2)\Big )$$  where  $0<\gamma'_{(k,i)}\leqslant 1$ with $\sum_{k=1}^{n_i} \gamma'_{(k,i)}=1$ and  $ u_{(k,i)} \in S_X, v^*_{(k,i)} \in S_{Y^*}$  $\forall k=1,2,\ldots,n_i.$ 

For all $i=1,2,\ldots,n,$ define
$$P_i=\{(k,i)\in \{1,2,\ldots,m_i\} \times \{i\} : (x^*\otimes y_i)\Big (x_{(k,i)}\otimes y^*_{(k,i)}\Big )>1-\beta\}$$
$$Q_i=\{(k,i)\in \{1,2,\ldots,n_i\} \times \{i\} : (x^*\otimes y_i)\Big (u_{(k,i)}\otimes v^*_{(k,i)}\Big )>1-\beta\}$$
Then,
$$1-\beta^2 < (x^*\otimes y_i) \Big (\sum_{k=1}^{m_i} \gamma_{(k,i)} \Big ( x_{(k,i)} \otimes y^*_{(k,i)}\Big ) \Big ) \hspace{5 cm}$$
$$\hspace{2 cm}=\sum_{(k,i)\in P_i} \gamma_{(k,i)} (x^*\otimes y_i)  \Big (x_{(k,i)} \otimes y^*_{(k,i)}\Big )  + 
\sum_{(k,i)\notin P_i} \gamma_{(k,i)} (x^*\otimes y_i) \Big (x_{(k,i)} \otimes y^*_{(k,i)}\Big ) $$
$$\leqslant \sum_{(k,i)\in P_i} \gamma_{(k,i)} + (1-\beta) \sum_{(k,i)\notin P_i} \gamma_{(k,i)} \hspace{4 cm}$$
$$=1-\sum_{(k,i)\notin P_i} \gamma_{(k,i)} +(1-\beta)\sum_{(k,i)\notin P_i} \gamma_{(k,i)} \hspace{4 cm}$$
i.e. $\sum_{(k,i)\notin P_i} \gamma_{(k,i)} <\beta$

Also observe, 
$$ y_i \Big ( \sum_{(k,i)\in P_i} \gamma_{(k,i)} y^*_{(k,i)} \Big )
\geqslant (x^*\otimes y_i) \Big (\sum_{(k,i)\in P_i} \gamma_{(k,i)} \Big ( x_{(k,i)} \otimes y^*_{(k,i)}\Big ) \Big )\hspace{3 cm}$$
$$=\sum_{(k,i)\in P_i} \gamma_{(k,i)} (x^*\otimes y_i) \Big ( x_{(k,i)} \otimes y^*_{(k,i)}\Big )$$
$$>(1-\beta)^2\hspace{4 cm}$$
$$>1-\delta \hspace{4 cm}$$
Therefore, $\phi_i := \sum_{(k,i)\in P_i} \gamma_{(k,i)} y^*_{(k,i)} \in S(B_{Y^*},y_i,\delta)$

Again for $(k,i)\in P_i,$
$$ x^*\Big ( x_{(k,i)} \Big ) 
\geqslant (x^*\otimes y_i) \Big ( x_{(k,i)} \otimes y^*_{(k,i)}\Big )
>1-\beta >1-\alpha$$
Therefore, $x_{(k,i)}\in S(B_X, x^*,\alpha)$ 

Similarly, $\psi_i := \sum_{(k,i)\in Q_i} \gamma'_{(k,i)} v^*_{(k,i)} \in S(B_{Y^*},y_i,\delta)$ and $u_{(k,i)}\in S(B_X, x^*,\alpha)$

Hence we have $\| x_{(k,i)} - x_0\|<\varepsilon$ and $\| u_{(k,i)} - x_0\|<\varepsilon$

Now, for all $i=1,2,\ldots,n$
$$\|f_i - x_0\otimes \phi_i\|
\leqslant \|f_i - \sum_{(k,i)\in P_i} \gamma_{(k,i)} \Big ( x_{(k,i)} \otimes y^*_{(k,i)}\Big )\| + \|\sum_{(k,i)\in P_i} \gamma_{(k,i)} \Big ( x_{(k,i)} \otimes y^*_{(k,i)}\Big ) - x_0\otimes \phi_i\|$$
$$\hspace{1 cm}=\|\sum_{(k,i)\notin P_i} \gamma_{(k,i)} \Big ( x_{(k,i)} \otimes y^*_{(k,i)}\Big )\| + \|\sum_{(k,i)\in P_i} \gamma_{(k,i)} \Big ( x_{(k,i)}- x_0\Big ) \otimes y^*_{(k,i)}\|$$
$$\leqslant \sum_{(k,i)\notin P_i} \gamma_{(k,i)} + \sum_{(k,i)\in P_i} \gamma_{(k,i)}  \|x_{(k,i)}- x_0\| \|y^*_{(k,i)}\| \hspace{2 cm}$$
$$<\beta + \varepsilon \hspace{8 cm}$$
Similarly, $\|g_i - x_0\otimes \psi_i\| < \beta + \varepsilon$

Hence, $$\|\sum_{i=1}^{n} \lambda_i f_i - \sum_{i=1}^{n} \lambda_i g_i\|
\leqslant \| \sum_{i=1}^{n} \lambda_i (f_i- x_0\otimes \phi_i)\| + \|\sum_{i=1}^{n} \lambda_i x_0 \otimes (\phi_i-\psi_i)\| + \|\sum_{i=1}^{n} \lambda_i (g_i- x_0\otimes \psi_i)\|$$
$$< \sum_{i=1}^{n} \lambda_i (\beta +\varepsilon) + \varepsilon + \sum_{i=1}^{n} \lambda_i (\beta + \varepsilon) \hspace{3 cm}$$
$$= 2\beta + 3\varepsilon 
\leqslant 4\varepsilon \hspace{6 cm}$$
Finally, $\|\sum_{i=1}^{n} \lambda_i f_i' - \sum_{i=1}^{n} \lambda_i g_i'\|$
$$< \Big | \sum_{i=1}^{n} \lambda_i f_i'(h)-\sum_{i=1}^{n} \lambda_i g_i'(h)\Big |+\varepsilon \hspace{9 cm} $$
$$\leqslant \Big | \sum_{i=1}^{n} \lambda_i f_i'(h)- \sum_{i=1}^{n} \lambda_i f_i (h) \Big | + \Big | \sum_{i=1}^{n} \lambda_i f_i(h)- \sum_{i=1}^{n} \lambda_i g_i (h) \Big | + \Big | \sum_{i=1}^{n} \lambda_i g_i(h)- \sum_{i=1}^{n} \lambda_i g'_i (h) \Big | +\varepsilon$$
$$<\varepsilon + 4\varepsilon + \varepsilon + \varepsilon
=7\varepsilon \hspace{10.5 cm}$$

Therefore, $diam\big (\sum_{i=1}^{n} \lambda_i S(B_{H^*},x^*\otimes y_i,\beta^2)\Big )\leqslant 7\varepsilon$ and so $H^*$ has $w^*$-$BSCSP.$ 
\end{proof}

Since  $X$ and $Y$are symmetric  in the proof of the
above result, we have the following:

\begin{corollary}\label{simetri} Let $X,Y$ be Banach spaces such that norm of $Y$ is non-rough and $X$ has the $BSCSP.$ Let  $H$ be a closed subspace of $L(X,Y)$ such that $X^*\otimes Y\subseteq H$. Then  $H^*$ has the $w^*$-$BSCSP.$
\end{corollary}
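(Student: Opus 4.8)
The plan is to deduce Corollary~\ref{simetri} from Proposition~\ref{necesaria} by exploiting the symmetry of the two tensor legs in the latter's proof. The first step is to translate the hypothesis on $Y$: by Lemma~\ref{rough}, the norm of $Y$ is non-rough precisely when $Y^*$ has the $w^*$-$BDP$. Thus the hypotheses of Corollary~\ref{simetri} read: $X$ has the $BSCSP$ and $Y^*$ has the $w^*$-$BDP$. Comparing with Proposition~\ref{necesaria}, where $X$ has the $BDP$ and $Y^*$ has the $w^*$-$BSCSP$, one sees that the roles of the single-slice property and of the convex-combination-of-slices property have simply been interchanged between the two factors. Since the generating set $S_X\otimes S_{Y^*}$ of $B_{H^*}$ furnished by Corollary~\ref{densidad} is symmetric in its two legs, the argument of Proposition~\ref{necesaria} should carry over after this swap.

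Concretely, given $\varepsilon>0$, I would use the $BSCSP$ of $X$ to pick a convex combination of slices $\sum_{i=1}^n \lambda_i S(B_X, x_i^*, \alpha)$ in $B_X$ of diameter less than $\varepsilon$, and the $w^*$-$BDP$ of $Y^*$ to pick a single $w^*$-slice $S(B_{Y^*}, y, \delta)$ of diameter less than $\varepsilon$, with $y\in S_Y$. After fixing $y_0^*\in S(B_{Y^*}, y, \delta)\cap S_{Y^*}$ and $\beta>0$ with $2\beta<\min\{\delta,\alpha,\varepsilon\}$, the candidate small convex combination of $w^*$-slices in $B_{H^*}$ is $\sum_{i=1}^n \lambda_i S(B_{H^*}, x_i^*\otimes y, \beta^2)$, now with $y$ fixed and the determining functionals $x_i^*$ varying with $i$, mirroring the roles previously played by $x^*$ and the $y_i$. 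For representatives drawn from $co(S_X\otimes S_{Y^*})$, the index sets $P_i, Q_i$ are defined by whether $x_i^*\otimes y$ nearly attains its supremum on each term, the low-weight terms are discarded exactly as before, and the convex combinations $\phi_i:=\sum_{(k,i)\in P_i}\gamma_{(k,i)} x_{(k,i)}$ (and the analogous $\psi_i$) now land in the $X$-slices $S(B_X, x_i^*, \alpha)$ rather than in $Y^*$-slices.

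The remaining estimates then go through verbatim under the formal substitution $x^*\leftrightarrow y$, $y_i\leftrightarrow x_i^*$, with $\phi_i\otimes y_0^*$ in place of $x_0\otimes\phi_i$; in particular the bound $\|\sum_i \lambda_i(\phi_i-\psi_i)\otimes y_0^*\|=\|\sum_i \lambda_i(\phi_i-\psi_i)\|$ is controlled by the small diameter of the convex combination of slices in $B_X$, exactly as the corresponding term was controlled by the small diameter in $B_{Y^*}$ in Proposition~\ref{necesaria}. The same chain of inequalities yields $diam\big(\sum_{i=1}^n \lambda_i S(B_{H^*}, x_i^*\otimes y, \beta^2)\big)\leq 7\varepsilon$, so that $H^*$ has the $w^*$-$BSCSP$. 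The only point requiring genuine care—the main, if minor, obstacle—is to confirm that no step of the original proof tacitly favours one tensor leg: the splitting of $z$ according to the size of $f(x_j)y(y_j^*)$, the triangle-inequality estimate for $\|x_j\otimes y_j^*-x_0\otimes y_0^*\|$, and the norm identity $\|a\otimes b\|=\|a\|\,\|b\|$ on $X^*\otimes Y\subseteq H$ are all symmetric in the two factors, so no new argument is needed beyond this bookkeeping.
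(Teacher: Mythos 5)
Your proposal is correct and is essentially the paper's own argument: the paper proves this corollary simply by remarking that the two legs of the generating set $S_X\otimes S_{Y^*}$ play symmetric roles in the proof of Proposition~\ref{necesaria}, and your write-up (including the initial translation of non-roughness of $Y$ into the $w^*$-$BDP$ of $Y^*$ via Lemma~\ref{rough}) is just a careful spelling-out of that same swap.
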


\begin{corollary}\label{bscsp prop proj}
 Let $X,Y$ be Banach spaces. Assume that $X$ has $BDP$ and $Y$ has $BSCSP.$ Then $X \hat{\otimes}_{\pi} Y$ has $BSCSP.$
 \end{corollary}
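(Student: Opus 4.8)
The plan is to reduce the statement to Proposition~\ref{necesaria} by passing to the bidual, exactly in the spirit of the proof of Corollary~\ref{SD2P}. The crucial structural fact I would use is that the dual of the projective tensor product is a space of operators, namely $(X\widehat{\otimes}_\pi Y)^*=L(X,Y^*)$, so that $(X\widehat{\otimes}_\pi Y)^{**}=(L(X,Y^*))^*$. By Theorem~\ref{bs 1st}, the space $X\widehat{\otimes}_\pi Y$ has $BSCSP$ precisely when its bidual $(X\widehat{\otimes}_\pi Y)^{**}$ has $w^*$-$BSCSP$; thus it suffices to prove that $(L(X,Y^*))^*$ has $w^*$-$BSCSP$.

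First I would invoke Proposition~\ref{necesaria} with the two factor spaces taken to be $X$ and $Y^*$, and with the operator space $H=L(X,Y^*)$ itself, which is trivially a closed subspace of $L(X,Y^*)$ containing $X^*\otimes Y^*$. The hypothesis that the first factor has $BDP$ is then simply the assumption that $X$ has $BDP$. The remaining hypothesis of Proposition~\ref{necesaria}, that the dual of the second factor has $w^*$-$BSCSP$, reads here as ``$Y^{**}$ has $w^*$-$BSCSP$''; and this follows from the assumption that $Y$ has $BSCSP$ together with Theorem~\ref{bs 1st} applied to $Y$. With both hypotheses verified, Proposition~\ref{necesaria} yields that $H^*=(L(X,Y^*))^*$ has $w^*$-$BSCSP$.

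Finally I would translate back: since $(L(X,Y^*))^*=(X\widehat{\otimes}_\pi Y)^{**}$, the bidual of $X\widehat{\otimes}_\pi Y$ has $w^*$-$BSCSP$, and one further application of Theorem~\ref{bs 1st} (now to the space $X\widehat{\otimes}_\pi Y$) gives that $X\widehat{\otimes}_\pi Y$ has $BSCSP$, as desired.

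I do not expect a genuine obstacle here, since all the analytic work is already packaged inside Proposition~\ref{necesaria}; the argument is a purely formal chain of the two dualities (the identification $(X\widehat{\otimes}_\pi Y)^*=L(X,Y^*)$ and the bidual characterization of Theorem~\ref{bs 1st}). The only delicate point is bookkeeping the duals correctly: one must recognize that the condition Proposition~\ref{necesaria} demands on the second factor $Y^*$ is a statement about $Y^{**}$, supplied by $BSCSP$ of $Y$ rather than by any direct assumption on $Y^*$. Getting that index matching right is where an error would most plausibly arise.
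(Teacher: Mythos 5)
Your proposal is correct and follows essentially the same route as the paper's own proof: identify $(X\widehat{\otimes}_\pi Y)^{**}$ with $L(X,Y^*)^*$, apply Proposition~\ref{necesaria} to $H=L(X,Y^*)$ using that $Y^{**}$ has $w^*$-$BSCSP$ (via Theorem~\ref{bs 1st} applied to $Y$), and then transfer back with Theorem~\ref{bs 1st}. The ``delicate point'' you flag --- that the hypothesis on the second factor becomes a statement about $Y^{**}$ --- is handled exactly as in the paper.
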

 
 \begin{proof}
Taking into account that the dual of the projective tensor product $X\widehat{\otimes}_\pi Y$ is $L(X,Y^*)$ and Theorem $\ref{necesaria},$ we can conclude that if $X$ has $BDP$ and $Y^{**}$ has $w^*$-$BSCSP,$ then  $(X\widehat{\otimes}_\pi Y)^{**}$ has $w^*$-$BSCSP.$ Applying Theorem $\ref{bs 1st}$ we get our desired result.
\end{proof}

Small changes in the proof of the Proposition \ref{necesaria}, allow to obtain the following result

\begin{proposition} \label{bscsp in inj tensor}
	Let $X$ and $Y$ be Banach spaces and $H$ be a closed subspace of $L(X^*,Y)$ such that $X \otimes Y\subset H.$ Also assume that $X^*$ has $w^*$-$BDP$ and $Y^*$ has $w^*$-$BSCSP,$ then $H^*$ has $w^*$-$BSCSP.$
\end{proposition}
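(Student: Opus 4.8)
The plan is to mirror the proof of Proposition \ref{necesaria} essentially line by line, replacing the first factor $X$ by $X^*$ throughout and feeding in the weak-star versions of the ingredients. The structural input that drives the whole argument is the description of $B_{H^*}$: in Proposition \ref{necesaria} this was $B_{H^*}=\overline{co}^{w^*}(S_X\otimes S_{Y^*})$ coming from Corollary \ref{densidad}, and here I would instead invoke Corollary \ref{inj lemma}, which under the hypothesis $X\otimes Y\subset H$ yields $B_{H^*}=\overline{co}^{w^*}(S_{X^*}\otimes S_{Y^*})$. This is precisely the decomposition needed to run the approximation-by-elementary-tensors machine.

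First I would fix $\varepsilon>0$ and produce the two small-diameter objects. Since $X^*$ has $w^*$-$BDP$, I choose a $w^*$-slice $S(B_{X^*},x,\alpha)$ of diameter less than $\varepsilon$, where crucially the determining functional is taken from the predual, $x\in S_X$, and I fix $x_0^*\in S(B_{X^*},x,\alpha)\cap S_{X^*}$. Since $Y^*$ has $w^*$-$BSCSP$, I choose a convex combination $\sum_{i=1}^n\lambda_i S(B_{Y^*},y_i,\delta)$ of $w^*$-slices of diameter less than $\varepsilon$, with $y_i\in S_Y$. Picking $\beta$ with $0<2\beta<\min\{\delta,\alpha,\varepsilon\}$, I then form $\sum_{i=1}^n\lambda_i S(B_{H^*},x\otimes y_i,\beta^2)$. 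The point to verify is that each $x\otimes y_i$ lies in the predual $H$, so that these are genuine $w^*$-slices: this is exactly where $X\otimes Y\subset H$ enters, since $x\in X$ and $y_i\in Y$ give $x\otimes y_i\in X\otimes Y\subset H$.

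With this setup the remainder is verbatim. Given two points $\sum_{i=1}^n\lambda_i f_i'$ and $\sum_{i=1}^n\lambda_i g_i'$ of the combination, I pick $h\in S_H$ nearly attaining the norm of their difference, and use the $w^*$-density of $co(S_{X^*}\otimes S_{Y^*})$ in $B_{H^*}$ to replace each $f_i',g_i'$ by a finite convex combination of elementary tensors $x^*_{(k,i)}\otimes y^*_{(k,i)}$, now with $x^*_{(k,i)}\in S_{X^*}$, lying in the intersection of the relevant $w^*$-slice with a suitable weak-star neighbourhood of $f_i'$ (resp. $g_i'$) at $h$. The evaluation $(x\otimes y_i)(x^*_{(k,i)}\otimes y^*_{(k,i)})=x^*_{(k,i)}(x)\,y^*_{(k,i)}(y_i)$ plays the role of $x^*(x_{(k,i)})\,y^*_{(k,i)}(y_i)$ from the original, so the index sets $P_i,Q_i$, the mass estimate $\sum_{(k,i)\notin P_i}\gamma_{(k,i)}<\beta$, and the averaged functionals $\phi_i,\psi_i\in S(B_{Y^*},y_i,\delta)$ are defined exactly as before; the inequality $x^*_{(k,i)}(x)>1-\beta>1-\alpha$ now places $x^*_{(k,i)}$ in $S(B_{X^*},x,\alpha)$, whence $\|x^*_{(k,i)}-x_0^*\|<\varepsilon$. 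The bounds $\|f_i-x_0^*\otimes\phi_i\|<\beta+\varepsilon$ and $\|g_i-x_0^*\otimes\psi_i\|<\beta+\varepsilon$ and the final chaining through $h$ then give $diam\big(\sum_{i=1}^n\lambda_i S(B_{H^*},x\otimes y_i,\beta^2)\big)\leq 7\varepsilon$, so $H^*$ has $w^*$-$BSCSP$.

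I expect the only genuine subtlety, as opposed to routine recopying, to be the bookkeeping that keeps every determining functional on the correct side: the $w^*$-$BDP$ slice in $B_{X^*}$ must be cut out by an element of $X$ (not merely of $X^{**}$), the $w^*$-$BSCSP$ slices in $B_{Y^*}$ by elements of $Y$, and consequently the tensors $x\otimes y_i$ must land in $X\otimes Y\subset H$ so that each $S(B_{H^*},x\otimes y_i,\beta^2)$ is honestly a weak-star slice. Once this is checked, the metric estimates are identical to those in Proposition \ref{necesaria}.
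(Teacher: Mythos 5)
Your proposal is correct and is essentially the paper's own argument: the paper gives no separate proof, stating only that ``small changes in the proof of Proposition \ref{necesaria}'' yield the result, and you have identified exactly those changes --- replacing Corollary \ref{densidad} by Corollary \ref{inj lemma} to get $B_{H^*}=\overline{co}^{w^*}(S_{X^*}\otimes S_{Y^*})$, and keeping the determining functionals in the preduals so that $x\otimes y_i\in X\otimes Y\subset H$ defines a genuine $w^*$-slice. The metric estimates then carry over verbatim, as you say.
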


We next provide an example to show that projective tensor product of two spaces having $SD2P$ and $BDP$ respectively may not have any one of $BDP$, $BHP$ and $BSCSP.$ 
\begin{example}
Let, $Z=c_0 \hat{\otimes}_{\pi} \mathbb{R}^2.$ From \cite{LLZ} it is known that every convex combination of slices of $B_Z$ has diameter two and hence $Z$ does not have $BSCSP$  while $\mathbb{R}^2$ has  $BSCSP.$ 
\end{example} 

Let $X$ be a Banach space, and let $u\in S_X$ and $f\in S_{X^*}$. We put $D(X,u):=\{ h \in B_{X^*}:h(u)=1\}$ and $D^{w^*}(X^*,f):=D(X^*,f)\cap X.$ Noticing that
$D(X,u)=D^{w^*}(X^{**},u)$. We define $n(X,u)$ as the largest
non-negative real number $k$ satisfying $$k\Vert x\Vert \leq
\sup \{\vert f(x)\vert :f\in D(X,u)\}$$ for every $x\in X$. 

\begin{proposition} \label{suficiente-1} Let $X,Y$ be Banach spaces such that there exists $f\in S_{X^{*}}$ such that $n(X^{*},f)=1$ . Let  $H$ be a closed subspace of $L(X,Y)$ such
	that $X^*\otimes Y\subseteq H$ and $H^*$ has the $w^*$-$BSCSP.$ Then  $Y^*$ has the $w^*$-$BSCSP.$
\end{proposition}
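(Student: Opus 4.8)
The plan is to realize $Y^*$ as a quotient of $H^*$ via an explicit isometric embedding built from $f$, and then to transport small convex combinations of $w^*$-slices from $H^*$ down to $Y^*$, using the hypothesis $n(X^*,f)=1$ precisely to lift slices. First I would set $J\colon Y\to H$, $J(y)=f\otimes y$. Since $\|f\|=1$ and $X^*\otimes Y\subseteq H$, one has $\|J(y)\|=\|f\otimes y\|=\|y\|$, so $J$ is an isometric embedding; hence its adjoint $J^*\colon H^*\to Y^*$ is a $w^*$-$w^*$-continuous metric surjection with $J^*(B_{H^*})=B_{Y^*}$ and $\|J^*\|\le 1$, given concretely by $(J^*z)(y)=z(f\otimes y)$. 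I would next record the usable form of the hypothesis: $n(X^*,f)=1$ means that $D(X^*,f)=\{\phi\in B_{X^{**}}:\phi(f)=1\}$ is norming for $X^*$, so by Goldstine, for every $g\in X^*$ and $\eta>0$ there is $x\in B_X$ with $f(x)>1-\eta$ and $g(x)>\|g\|-\eta$.

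Given $\varepsilon>0$, the $w^*$-$BSCSP$ of $H^*$ yields a convex combination $C=\sum_{i=1}^n\lambda_i S_i$ with $S_i=S(B_{H^*},T_i,\alpha_i)$, $T_i\in S_H$, and $\mathrm{diam}(C)<\varepsilon$. Because $J^*$ is linear and $1$-Lipschitz, $J^*(C)=\sum_{i=1}^n\lambda_i J^*(S_i)$ and $\mathrm{diam}(J^*(C))<\varepsilon$. Thus the whole problem reduces to one claim: each $J^*(S_i)$ contains a genuine $w^*$-slice $D_i=S(B_{Y^*},y_i,\gamma_i)$ of $B_{Y^*}$. Granting this, $\sum_i\lambda_i D_i\subseteq J^*(C)$ has diameter $<\varepsilon$, and being a convex combination of $w^*$-slices it witnesses the $w^*$-$BSCSP$ of $Y^*$.

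To produce $D_i$, I would use $\|T_i\|=1$ to pick $y^*\in S_{Y^*}$ with $\|T_i^*y^*\|>1-\eta$, apply the working form of $n(X^*,f)=1$ to $g=T_i^*y^*$ to obtain $x\in B_X$ with $f(x)>1-\eta$ and $y^*(T_ix)=g(x)>1-\eta$, and set $y_i:=T_ix\in Y$ (so $1-\eta<\|y_i\|\le 1$). For $w\in S(B_{Y^*},y_i,\gamma)$ the element $x\otimes w\in B_{H^*}$ pairs with $T_i$ as $(x\otimes w)(T_i)=w(T_ix)>1-\eta-\gamma$, so $x\otimes w\in S_i$ as soon as $\eta+\gamma<\alpha_i$, while $J^*(x\otimes w)=f(x)\,w$. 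This is where the hypothesis is indispensable, and where I expect the main obstacle to lie: it is exactly what allows $f$ to be $\approx 1$ and $T_i$ to nearly attain its norm at one and the same point $x$; without $D(X^*,f)$ norming $X^*$ there is no reason such a common near-attaining point exists.

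The only remaining subtlety is that $J^*(x\otimes w)=f(x)w$ rather than $w$. I would absorb this defect, of size $1-f(x)<\eta$, by correcting the lift: add to $x\otimes w$ a norm-$\le\eta$ element of $\ker J^*=(f\otimes Y)^{\perp}$ realizing the missing value $(1-f(x))w$ (possible since $J^*$ is a metric surjection), and then shrink $\eta,\gamma$ relative to the $\alpha_i$ so that the corrected lift still lies in $S_i$. This yields the exact inclusion $D_i\subseteq J^*(S_i)$ after renaming constants, completing the reduction. This last bookkeeping is routine; the essential content is the simultaneous near-attainment furnished by $n(X^*,f)=1$.
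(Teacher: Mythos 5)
Your argument is, underneath the quotient-map packaging, essentially the paper's own proof: the slices $D_i=S(B_{Y^*},T_ix,\gamma)$ you build are the paper's $S_i$, the lift $w\mapsto x\otimes w$ is the same, and applying $J^*$ and testing against $y\in S_Y$ is the same as pairing elements of $H^*$ against $f\otimes y\in B_H$, which is what the paper does. Your reading of $n(X^*,f)=1$ (Goldstine plus the fact that $D(X^*,f)$ norms $X^*$, to produce $x\in B_X$ with $f(x)>1-\eta$ and $(T_i^*y^*)(x)>\Vert T_i^*y^*\Vert-\eta$ simultaneously) is also the correct use of the hypothesis, modulo a harmless sign adjustment of $y^*$.

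The one step that does not work as written is the final ``correction''. First, an element $u$ with $J^*u=(1-f(x))w\neq 0$ cannot lie in $\ker J^*$; what you mean is a lift of $(1-f(x))w$ of norm at most $1-f(x)\le\eta$, which does exist because $J^*$ maps $B_{H^*}$ onto $B_{Y^*}$. More seriously, the corrected element $z=x\otimes w+u$ may have norm up to $1+\eta$, so it need not belong to $B_{H^*}$ and hence need not belong to the slice $S_i\subset B_{H^*}$; the exact inclusion $D_i\subseteq J^*(S_i)$ is therefore not established. This is repairable in either of two ways: renormalize ($z/\max\{1,\Vert z\Vert\}$ still lies in $S_i$ once $\eta,\gamma$ are small relative to $\alpha_i$, and its image under $J^*$ is $c_ww$ with $\vert 1-c_w\vert\le\eta$, which only degrades the final diameter bound from $\varepsilon$ to $\varepsilon+2\eta$); or do what the paper does and avoid the correction altogether, namely invoke that $n(X^*,f)=1$ forces $B_X$ to be the closed absolutely convex hull of $D^{w^*}(X^*,f)$ (cited there from [ABGRP]), so that the points $x_i$ can be taken with $f(x_i)=1$ exactly and $J^*(x_i\otimes w)=w$ on the nose.
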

\begin{proof}
	Let us assume that $H^*$ has the $w^*$-$BSCSP.$ Let $\varepsilon >0$. Then there exist $T_1,\ldots ,T_n\in S_H$, $\delta >0$ and $\lambda_1,\ldots ,\lambda_n\in
	(0,1)$ with $\sum_{i=1}^{n}\lambda_i=1$, such that $$S:=\sum_{i=1}^{n}\lambda_iS(B_{Y^*},T_i,\delta ),$$ has diameter less than $\varepsilon $. By Corollary \ref{densidad}, for each $i=1,\ldots ,n $, there exist $x_{i}\in S_X, \ y^*_{i}\in S_{Y^*}$ such that $y^*_{i}(T_i(x_{i}))>1-\delta$. We define the slices $S_{i}:=\{y^*\in B_{Y^*}:y^*(T_i(x_{i}))>1-\delta\}$ for each $i=1,\ldots ,n $. It is clear that $x_{i}\otimes S_{i}\subset S(B_{H^*},T_i,\delta )$, a hence $\sum_{i=1}^{n}\lambda_i x_i\otimes S_{i}\subset S$. As $B_X=\overline{\vert co\vert}(D^{w^*}(X^*,f))$ \cite{ABGRP} we can assume that $x_i\in D^{w^*}(X^*,f)\ \forall i\in\{1,\ldots, n\}$. If $diam (\sum_{i=1}^{n}\lambda _iS_{i})\geq \varepsilon$, then there exist $u_i,v_i\in S_i$ for $i=1,\ldots ,n $ and $y\in S_Y$ such that $y(\sum_{i=1}^{n}\lambda _iu_i-\sum_{i=1}^{n}\lambda _iv_i)\geq \varepsilon$. A hence $$(f\otimes y)(\sum_{i=1}^{n}\lambda _i x_i\otimes (u_i-v_i))\geq \varepsilon .$$ This is a contradicction, since $\sum_{i=1}^{n}\lambda _i x_i\otimes S_{i}\subset S$ and $diam (S)<\varepsilon$.
	Since $\varepsilon$ is arbitrary, we conclude that  $Y^*$ have the $w^*$-$BSCSP.$
\end{proof}

\begin{proposition} \label{suficiente-2} Let $X,Y$ be Banach spaces such that there exists $f\in S_{Y^{**}}$ such that $n(Y^{**},f)=1$. Let  $H$ be a closed subspace of $L(X,Y)$ such
	that $X^*\otimes Y\subseteq H$ and $H^*$ has the $w^*$-$BSCSP.$ Then  $X$ has the $BSCSP.$
\end{proposition}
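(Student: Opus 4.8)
The plan is to run the argument of Proposition \ref{suficiente-1} with the two factors interchanged: there the hypothesis $n(X^*,f)=1$ was used to read off the $Y^*$-coordinate of a tensor, while here $n(Y^{**},f)=1$ will be used to read off the $X$-coordinate. Concretely, out of a small-diameter convex combination of $w^*$-slices of $B_{H^*}$ I want to manufacture a convex combination of slices of $B_X$ of comparably small diameter; since $\varepsilon>0$ is arbitrary, this yields the $BSCSP$ for $X$.

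First I would unwind the $w^*$-$BSCSP$ of $H^*$: fix $\varepsilon>0$ and obtain $T_1,\dots,T_n\in S_H$, weights $\lambda_i\in(0,1)$ with $\sum_{i}\lambda_i=1$ and $\delta>0$ such that $S:=\sum_{i=1}^n\lambda_i S(B_{H^*},T_i,\delta)$ has diameter $<\varepsilon$. By Corollary \ref{densidad} the set $S_X\otimes S_{Y^*}$ is norming for $H$, so $\|T_i\|=1$ is attained on it up to $\delta$; combining this with the density $B_{Y^*}=\overline{\vert co \vert}(D^{w^*}(Y^{**},f))$ (a consequence of $n(Y^{**},f)=1$, cf. \cite{ABGRP}) and the convexity of $y^*\mapsto\|y^*\circ T_i\|$, I can choose for each $i$ a functional $y_i^*\in D^{w^*}(Y^{**},f)$ — so that $f(y_i^*)=1$ — with $\|y_i^*\circ T_i\|_{X^*}>1-\delta$. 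I then define the slices of $B_X$
$$S_i:=\{x\in B_X: y_i^*(T_i(x))>1-\delta\}\qquad(i=1,\dots,n),$$
which are nonempty, and note that $a\in S_i$ gives $(a\otimes y_i^*)(T_i)=y_i^*(T_i(a))>1-\delta$, so $S_i\otimes y_i^*\subset S(B_{H^*},T_i,\delta)$ and hence $\sum_i\lambda_i S_i\otimes y_i^*\subset S$.

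It remains to estimate $\operatorname{diam}(\sum_i\lambda_i S_i)$. Take $a_i,b_i\in S_i$ and any $x^*\in S_{X^*}$. In Proposition \ref{suficiente-1} the tensor $f\otimes y$ lay in $X^*\otimes Y\subseteq H$ and did the job exactly; here the natural test functional would be $x^*\otimes f$, and this is the main obstacle: $f$ lives only in $Y^{**}$, not in $Y$, so $x^*\otimes f$ is not an operator in $L(X,Y)$ and cannot test the $H^*$-norm. I would circumvent this with Goldstine's theorem: since $f(y_i^*)=1$ for every $i$, for each $\eta>0$ there is $y\in B_Y$ with $y_i^*(y)>1-\eta$ for all $i$ simultaneously. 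Then $x^*\otimes y\in X^*\otimes Y\subseteq B_H$, and using $(a\otimes y_i^*)(x^*\otimes y)=x^*(a)\,y_i^*(y)$ one has the identity
$$x^*\Big(\sum_i\lambda_i(a_i-b_i)\Big)=\Big(\sum_i\lambda_i(a_i-b_i)\otimes y_i^*\Big)(x^*\otimes y)+\sum_i\lambda_i x^*(a_i-b_i)\big(1-y_i^*(y)\big).$$
The first summand is bounded in modulus by $\operatorname{diam}(S)<\varepsilon$ (both $\sum_i\lambda_i a_i\otimes y_i^*$ and $\sum_i\lambda_i b_i\otimes y_i^*$ lie in $S$ and $\|x^*\otimes y\|\le1$), while the second is at most $\sum_i\lambda_i\cdot2\cdot\eta=2\eta$. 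Hence $|x^*(\sum_i\lambda_i(a_i-b_i))|<\varepsilon+2\eta$ for every $\eta>0$, so $\|\sum_i\lambda_i(a_i-b_i)\|\le\varepsilon$; taking the supremum over $x^*\in S_{X^*}$ and over $a_i,b_i\in S_i$ gives $\operatorname{diam}(\sum_i\lambda_i S_i)\le\varepsilon$, which establishes the $BSCSP$ for $X$.

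The decisive point, and the only genuine departure from the proof of Proposition \ref{suficiente-1}, is this Goldstine approximation of $f\in Y^{**}$ by a single $y\in B_Y$ nearly norming the finitely many $y_i^*$ at once; the exact function of the hypothesis $n(Y^{**},f)=1$ is to herd all the $y_i^*$ into the common face $D^{w^*}(Y^{**},f)$ so that such a $y$ exists. The resulting $\eta$-slack forces the diameter estimate to be read off in the limit $\eta\to0$ rather than as the strict contradiction available in Proposition \ref{suficiente-1}, but the conclusion is unaffected.
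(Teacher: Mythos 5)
Your proof is correct, and it follows the route the paper intends: Proposition \ref{suficiente-2} is stated without proof, implicitly as the symmetric counterpart of Proposition \ref{suficiente-1}. What you add, and what the paper's ``by symmetry'' gloss quietly skips over, is the repair of the one point where the symmetry genuinely breaks: in Proposition \ref{suficiente-1} the test functional $f\otimes y$ lies in $X^*\otimes Y\subseteq H$, whereas here the naive analogue $x^*\otimes f$ does not, since $f\in Y^{**}$. Your Goldstine step --- choosing a single $y\in B_Y$ with $y_i^*(y)>1-\eta$ for all $i$ at once, which is exactly what the normalization $f(y_i^*)=1$ forced by $n(Y^{**},f)=1$ buys you --- together with the decomposition
$$x^*\Bigl(\sum_i\lambda_i(a_i-b_i)\Bigr)=\Bigl(\sum_i\lambda_i(a_i-b_i)\otimes y_i^*\Bigr)(x^*\otimes y)+\sum_i\lambda_i x^*(a_i-b_i)\bigl(1-y_i^*(y)\bigr)$$
is a clean and complete fix; the resulting bound $\varepsilon+2\eta$ with $\eta\to 0$ correctly replaces the strict contradiction used in Proposition \ref{suficiente-1}. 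All the supporting steps (norming of $S_X\otimes S_{Y^*}$ via Corollary \ref{densidad}, pushing the $y_i^*$ into $D^{w^*}(Y^{**},f)$ using $B_{Y^*}=\overline{\vert co\vert}(D^{w^*}(Y^{**},f))$ and the convexity and symmetry of $y^*\mapsto\Vert y^*\circ T_i\Vert$, and the inclusion $\sum_i\lambda_i S_i\otimes y_i^*\subset S$) check out.
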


\begin{corollary}\label{corolario-teostrong}
	Let $X,Y$ be Banach spaces. Assume that the norm of $X^*$ is
	non-rough and that there exists $f\in S_{X^{*}}$ such that
	$n(X^{*},f)=1$. Then for every closed subspace $H$ of $L(X,Y)$
	such that $X^*\otimes Y\subseteq H$, the following assertion are
	equivalent:
	\begin{enumerate}
		\item[i)] $H^*$ has the $w^*$-$BSCSP.$ \item[ii)] $Y^*$ has  the $w^*$-$BSCSP.$
	\end{enumerate}
\end{corollary}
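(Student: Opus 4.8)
The plan is to prove the two implications separately, noting that the two standing hypotheses on $X$ serve the two directions independently: the existence of $f\in S_{X^*}$ with $n(X^*,f)=1$ is exactly what the forward implication needs, while the non-roughness of $X^*$ is what powers the reverse implication.

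For the implication (i) $\Rightarrow$ (ii), I would invoke Proposition \ref{suficiente-1} directly. Its hypotheses are precisely that there exists $f\in S_{X^*}$ with $n(X^*,f)=1$, that $H$ is a closed subspace of $L(X,Y)$ with $X^*\otimes Y\subseteq H$, and that $H^*$ has the $w^*$-$BSCSP$. All three are available in the present setting, so its conclusion that $Y^*$ has the $w^*$-$BSCSP$ follows at once, with no further computation.

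For the reverse implication (ii) $\Rightarrow$ (i), the crucial step is to reformulate the non-roughness of $X^*$ as a dentability statement about $X$ itself. I would apply Lemma \ref{rough} with the underlying space taken to be $X^*$: this identifies ``$X^*$ is non-rough'' with ``$X^{**}$ has the $w^*$-$BDP$''. Then Theorem \ref{bs 1st} converts the latter into ``$X$ has $BDP$''. Once $X$ is known to have $BDP$, and since $Y^*$ has the $w^*$-$BSCSP$ by hypothesis (ii), Proposition \ref{necesaria} applies verbatim to the same subspace $H$ and yields that $H^*$ has the $w^*$-$BSCSP$, which is (i).

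I do not expect a genuine analytic obstacle here; the corollary is essentially an assembly of the preceding propositions. The only point demanding care is the chaining used in the reverse direction: Lemma \ref{rough} must be applied with the base space $X^*$ (so that its dual is $X^{**}$, not $X^*$), and Theorem \ref{bs 1st} then bridges $X^{**}$ and $X$. Once this identification of ``$X^*$ non-rough'' with ``$X$ has $BDP$'' is in place, both Proposition \ref{suficiente-1} and Proposition \ref{necesaria} apply directly, and the stated equivalence drops out.
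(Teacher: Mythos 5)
Your proof is correct and follows exactly the route the paper intends (the corollary is stated there without an explicit proof, as an assembly of the preceding results): Proposition~\ref{suficiente-1} gives i)~$\Rightarrow$~ii), while applying Lemma~\ref{rough} to the space $X^*$ and then Theorem~\ref{bs 1st} identifies ``the norm of $X^*$ is non-rough'' with ``$X$ has the $BDP$'', after which Proposition~\ref{necesaria} yields ii)~$\Rightarrow$~i). Your care in applying Lemma~\ref{rough} with base space $X^*$ (so that the dual in question is $X^{**}$) is precisely the one point that needed attention.
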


Taking $X=\ell_1$ in the above corollary one has

\begin{corollary} Let $Y$ be Banach space. Then $L(\ell_1 ,Y)^*$ 
has the $w^*$-$BSCSP$ if and only if $Y^*$ has  the $w^*$-$BSCSP.$
\end{corollary}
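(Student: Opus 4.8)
The plan is to obtain this statement as the special case $X=\ell_1$ of Corollary~\ref{corolario-teostrong}, taking $H=L(\ell_1,Y)$, the whole operator space. This $H$ is trivially a closed subspace of itself and it contains $X^*\otimes Y=\ell_\infty\otimes Y$ (the finite-rank operators), so the structural hypothesis on $H$ holds automatically. Once the two standing hypotheses of that corollary are verified for $X=\ell_1$, the equivalence ``$L(\ell_1,Y)^*$ has $w^*$-$BSCSP$ $\iff$ $Y^*$ has $w^*$-$BSCSP$'' is immediate. Thus the whole task reduces to checking, for $X=\ell_1$, that (a) the norm of $X^*=\ell_\infty$ is non-rough, and (b) there exists $f\in S_{\ell_\infty}$ with $n(\ell_\infty,f)=1$.

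For (a), I would exhibit a single point of Fr\'echet differentiability. Working with $u=e_1=(1,0,0,\dots)\in S_{\ell_\infty}$, note that for any $h\in\ell_\infty$ with $\|h\|_\infty=\varepsilon<\tfrac12$ one has $|1\pm h_1|\ge 1-\varepsilon>\varepsilon\ge\sup_{n\ge2}|h_n|$, so $\|e_1\pm h\|_\infty=|1\pm h_1|$ and hence $\|e_1+h\|_\infty+\|e_1-h\|_\infty-2=0$. Therefore $\eta(\ell_\infty,e_1)=0$, i.e.\ the sup-norm is Fr\'echet differentiable at $e_1$. Since this already prevents the norm from being $\varepsilon$-rough for any $\varepsilon>0$, the norm of $\ell_\infty$ is non-rough. (Equivalently one may invoke the fact, recalled in the introduction, that $\eta(X,u)=0$ characterises Fr\'echet points.)

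For (b), take $f=\mathbf 1=(1,1,1,\dots)\in S_{\ell_\infty}$. Each coordinate functional $e_n^*\in\ell_1\subset(\ell_\infty)^*$ has norm one and satisfies $e_n^*(f)=1$, so $e_n^*\in D(\ell_\infty,f)$; since $\sup_n|e_n^*(x)|=\|x\|_\infty$ for every $x\in\ell_\infty$, the face $D(\ell_\infty,f)$ is norming for $\ell_\infty$, and therefore $n(\ell_\infty,f)=1$. With (a) and (b) established, Corollary~\ref{corolario-teostrong} applied to $X=\ell_1$ and $H=L(\ell_1,Y)$ yields exactly the asserted equivalence.

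The only genuine content is the verification of (a) and (b), and I expect (a) to be the delicate point. Non-roughness of $\ell_\infty$ is not automatic: $\ell_\infty$ is not Asplund, so by Godefroy's characterization not every equivalent norm on it is non-rough. What rescues us is that the canonical sup-norm happens to possess Fr\'echet points, arising from the ``isolated'' coordinates; locating such a point (as done above at $e_1$) is the step that needs care. By contrast (b) is a soft consequence of $\ell_\infty$ being normed by its coordinate (point-evaluation) functionals, which all lie in the face determined by the constant function $\mathbf 1$.
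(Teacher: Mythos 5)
Your proof is correct and follows the same route as the paper, which obtains the statement simply as the case $X=\ell_1$ of Corollary~\ref{corolario-teostrong} with $H=L(\ell_1,Y)$. Your explicit checks that the sup-norm of $\ell_\infty$ is Fr\'echet differentiable at $e_1$ (hence non-rough) and that $n(\ell_\infty,\mathbf{1})=1$ via the coordinate functionals supply exactly the two hypotheses the paper leaves implicit, and both verifications are sound.
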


Again, using the duality  $(X\widehat{\otimes}_\pi Y)^*
=L(X,Y^*)$, we get from Proposition \ref{necesaria}, a necessary
condition in order to the projective tensor product of Banach
spaces has the $BSCSP.$

\begin{corollary} Let $Y$ be Banach space. Then $\ell _1\widehat{\otimes}_\pi Y$  
	has the $BSCSP$ if and only if $Y^*$ has  the $w^*$-$BSCSP.$
\end{corollary}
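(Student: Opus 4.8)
The plan is to derive this final corollary as a direct specialization of Proposition \ref{necesaria} together with the duality identification of the dual of a projective tensor product. The key observation is that $(\ell_1 \widehat{\otimes}_\pi Y)^* = L(\ell_1, Y^*)$, so I want to apply the machinery already developed for $H^*$ where $H = L(\ell_1, Y^*)$, and then transfer the $w^*$-$BSCSP$ statement about the bidual back to the space $\ell_1 \widehat{\otimes}_\pi Y$ itself via Theorem \ref{bs 1st}.

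First I would establish the forward implication. Set $H = L(\ell_1, Y^*)$, which is a closed subspace of $L(\ell_1, Y^*)$ containing $\ell_1^* \otimes Y^* = \ell_\infty \otimes Y^*$, so the structural hypothesis $X^* \otimes Y \subseteq H$ of Proposition \ref{necesaria} is met with the roles $X = \ell_1$ and $Y$ replaced by $Y^*$. The crucial arithmetic fact is that $\ell_1$ has the $BDP$: indeed $\ell_1$ has the $RNP$, and $RNP$ implies $BDP$ as noted after the implication diagram. Thus, assuming $Y^*$ has the $w^*$-$BSCSP$ (which must be phrased so that it plays the role of ``$Y^*$ has $w^*$-$BSCSP$'' in Proposition \ref{necesaria}, with the second space being $Y^*$ so that its dual-side property is $w^*$-$BSCSP$), Proposition \ref{necesaria} yields that $H^* = (\ell_1 \widehat{\otimes}_\pi Y)^{**}$ has the $w^*$-$BSCSP$. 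Then Theorem \ref{bs 1st} converts the $w^*$-$BSCSP$ of the bidual into the $BSCSP$ of $\ell_1 \widehat{\otimes}_\pi Y$ itself.

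For the reverse implication I would invoke Corollary \ref{corolario-teostrong} (or equivalently Proposition \ref{suficiente-1}): the point is that $\ell_1 = \ell_1$ has the special geometric feature that its dual $\ell_\infty = (\ell_1)^*$ admits a functional $f \in S_{(\ell_1)^*}$ with $n((\ell_1)^*, f) = 1$, and moreover the norm of $(\ell_1)^*$ is non-rough in the relevant sense; this is precisely the configuration under which Corollary \ref{corolario-teostrong} gives the equivalence between ``$H^*$ has $w^*$-$BSCSP$'' and ``$Y^*$ has $w^*$-$BSCSP$''. Concretely, the extreme points of $B_{\ell_\infty}$ achieve the $n(\cdot,\cdot) = 1$ condition because $\ell_1$ is an $L$-embedded space whose unit ball is the absolutely convex hull of the $w^*$-denting images of coordinate functionals. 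Combining this equivalence at the level of $H^* = (\ell_1 \widehat{\otimes}_\pi Y)^{**}$ with Theorem \ref{bs 1st} once more closes the loop and delivers both directions simultaneously.

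The main obstacle I anticipate is bookkeeping the duality roles correctly: one must be careful that in applying Proposition \ref{necesaria} the "$Y$" there is instantiated as "$Y^*$" here (since the tensor dual is $L(\ell_1, Y^*)$, not $L(\ell_1, Y)$), so that the hypothesis ``$Y^*$ has $w^*$-$BSCSP$'' in the proposition becomes a condition on $(Y^*)^* = Y^{**}$ — and verifying that this matches the statement ``$Y^*$ has $w^*$-$BSCSP$'' in the corollary requires tracking the $w^*$-$BSCSP$ property through one extra level of duality via Theorem \ref{bs 1st}. Once this index-matching is handled cleanly, the proof is essentially a one-line citation of Proposition \ref{necesaria} (for sufficiency) and Corollary \ref{corolario-teostrong} (for necessity), with Theorem \ref{bs 1st} used to pass between $\ell_1 \widehat{\otimes}_\pi Y$ and its bidual. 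No genuinely new estimate is needed; all the hard diameter computations have already been carried out in the proof of Proposition \ref{necesaria}.
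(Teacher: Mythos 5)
Your route --- identify $(\ell_1\widehat{\otimes}_\pi Y)^* = L(\ell_1,Y^*)$, feed $H=L(\ell_1,Y^*)$ into Proposition \ref{necesaria} (using that $\ell_1$ has $RNP$, hence $BDP$) for sufficiency and into Proposition \ref{suficiente-1}/Corollary \ref{corolario-teostrong} (using $n(\ell_\infty,f)=1$ for $f=(1,1,\dots)$) for necessity, then pass between the tensor product and its bidual via Theorem \ref{bs 1st} --- is exactly the derivation the paper intends; the paper itself offers nothing beyond a one-line citation of the same ingredients. The problem is the step you explicitly defer as ``index-matching'': it does not close. Once the second space in Proposition \ref{necesaria} (or in Corollary \ref{corolario-teostrong}) is instantiated as $Y^*$, the dual-side condition appearing in those results becomes ``$(Y^*)^*=Y^{**}$ has the $w^*$-$BSCSP$.'' Theorem \ref{bs 1st} converts that into ``$Y$ has the $BSCSP$'' --- it relates a space to its \emph{bidual} --- and neither it nor anything else in the paper converts ``$Y^{**}$ has the $w^*$-$BSCSP$'' into ``$Y^*$ has the $w^*$-$BSCSP$.'' So what your argument actually establishes is: $\ell_1\widehat{\otimes}_\pi Y$ has the $BSCSP$ if and only if $Y$ has the $BSCSP$.

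This is not a bookkeeping issue that can be ``handled cleanly,'' because the two conditions are genuinely inequivalent. For $Y=c_0$ the dual $Y^*=\ell_1$ even has the $w^*$-$BDP$ (the $w^*$-slice determined by $e_1\in S_{c_0}$ has small diameter), hence the $w^*$-$BSCSP$; yet $c_0$ has the strong diameter two property, so it fails the $BSCSP$, and your own chain then shows $\ell_1\widehat{\otimes}_\pi c_0$ fails the $BSCSP$ as well. In the other direction, $Y=\ell_1$ has the $BSCSP$ (via $RNP$) while its octahedral norm forces every convex combination of $w^*$-slices of $B_{\ell_\infty}$ to have diameter two. In short, the statement as printed appears to carry a slip (the natural conclusion of this machinery is ``$Y$ has the $BSCSP$''), and your argument is a correct proof of that version; as a proof of the statement as written it has an unfillable gap at precisely the point you labelled as routine. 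A minor additional point: your justification of $n((\ell_1)^*,f)=1$ via ``$L$-embeddedness'' is garbled --- the witness is $f=(1,1,1,\dots)\in S_{\ell_\infty}$, whose face $D(\ell_\infty,f)$ contains the coordinate functionals $e_n$, and these norm $\ell_\infty$.
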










\section{Small diameter property in injective tensor products}

From symmetry of the spaces $X$ and $Y$ in the proof of the Proposition \ref{bscsp in inj tensor}, this one can be written also in the following way. 

\begin{corollary}
Let $X$ and $Y$ be Banach spaces and $H$ be a closed subspace of $L(X^*,Y)$ such that $X \otimes Y\subset H.$ Also assume that $X^*$ has $w^*$-$BSCSP$ and $Y^*$ has $w^*$-$BDP,$ then $H^*$ has $w^*$-$BSCSP.$
\end{corollary}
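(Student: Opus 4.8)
The plan is to re-run the proof of Proposition~\ref{bscsp in inj tensor} with the two tensor factors $X$ and $Y$ interchanged. The hypotheses stated here are precisely those of that proposition after the roles of $X^*$ and $Y^*$ are swapped, and the entire argument rests only on data that are symmetric under this swap, so no new computation should be needed.

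First I would isolate the symmetric ingredients underlying that proof. By Corollary~\ref{inj lemma} we have $B_{H^*}=\overline{co}^{w^*}(S_{X^*}\otimes S_{Y^*})$, an identity unchanged by interchanging the two factors. The $w^*$-slices of $B_{H^*}$ that occur are determined by elementary tensors $x\otimes y\in X\otimes Y\subseteq H$, and on the generators of $B_{H^*}$ they act through the symmetric pairing $(x\otimes y)(x^*\otimes y^*)=x^*(x)\,y^*(y)$. Moreover $\|x^*\otimes y^*\|_{H^*}=1$ for $x^*\in S_{X^*}$, $y^*\in S_{Y^*}$ (as in Lemma~\ref{densidad-principal}), and for a fixed unit $y_0^*\in S_{Y^*}$ one has $\|z\otimes y_0^*\|_{H^*}\le\|z\|$ for every $z\in X^*$, which is the mirror image of the cross-term estimate used in Proposition~\ref{necesaria}. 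The only asymmetry in the original argument is a bookkeeping one: which factor supplies the single small slice (the dentable side) and which supplies the convex combination of small slices (the $BSCSP$ side).

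Next I would reassign those two roles. Fix $\varepsilon>0$. Since $Y^*$ has $w^*$-$BDP$, choose a single $w^*$-slice $S(B_{Y^*},y,\alpha)$ of diameter less than $\varepsilon$ (with $y\in S_Y$) together with a unit $y_0^*\in S(B_{Y^*},y,\alpha)\cap S_{Y^*}$; since $X^*$ has $w^*$-$BSCSP$, choose a convex combination $\sum_{i=1}^n\lambda_i S(B_{X^*},x_i,\delta)$ of $w^*$-slices of diameter less than $\varepsilon$ (with $x_i\in S_X$). With $0<2\beta<\min\{\alpha,\delta,\varepsilon\}$, the candidate set in $B_{H^*}$ becomes the convex combination of $w^*$-slices $\sum_{i=1}^n\lambda_i S(B_{H^*},x_i\otimes y,\beta^2)$, whose determining functionals $x_i\otimes y\in X\otimes Y\subseteq H$ are legitimate. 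From here the computation is word-for-word that of Proposition~\ref{bscsp in inj tensor} with $x\otimes y_i$ replaced by $x_i\otimes y$: for each $i$ one splits an approximant $\sum_k\gamma_{(k,i)}x^*_{(k,i)}\otimes y^*_{(k,i)}\in co(S_{X^*}\otimes S_{Y^*})$ according to whether $(x_i\otimes y)(x^*_{(k,i)}\otimes y^*_{(k,i)})>1-\beta$, checks that the complementary weights sum to less than $\beta$, and deduces that for the surviving indices $y^*_{(k,i)}\in S(B_{Y^*},y,\alpha)$, so all these lie within $\varepsilon$ of the fixed $y_0^*$, while the averaged $X^*$-parts $\phi_i:=\sum_{(k,i)\in P_i}\gamma_{(k,i)}x^*_{(k,i)}$ satisfy $\phi_i\in S(B_{X^*},x_i,\delta)$ (and analogously one obtains $\psi_i\in S(B_{X^*},x_i,\delta)$ from the second element $g_i$). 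The diameter of $\sum_i\lambda_i S(B_{X^*},x_i,\delta)$ then controls the cross term $\|\sum_i\lambda_i(\phi_i-\psi_i)\otimes y_0^*\|$, and combining the three estimates exactly as before bounds $diam\big(\sum_{i=1}^n\lambda_i S(B_{H^*},x_i\otimes y,\beta^2)\big)$ by a fixed multiple of $\varepsilon$. As $\varepsilon$ was arbitrary, $H^*$ has the $w^*$-$BSCSP$.

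I do not expect a genuine obstacle, since the result is asserted to follow by symmetry; the one point deserving care is precisely the verification that the dentable factor and the $BSCSP$ factor can be exchanged without disturbing any estimate. This is guaranteed by the two symmetric facts recorded above — the factorization $B_{H^*}=\overline{co}^{w^*}(S_{X^*}\otimes S_{Y^*})$ and the pairing $(x\otimes y)(x^*\otimes y^*)=x^*(x)y^*(y)$ — so the interchange is legitimate and the proof of Proposition~\ref{bscsp in inj tensor} transcribes verbatim.
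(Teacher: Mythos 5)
Your proposal is correct and follows exactly the paper's route: the paper obtains this corollary by invoking the symmetry of $X$ and $Y$ in the proof of Proposition~\ref{bscsp in inj tensor}, which is precisely the role-swap you carry out. Your version merely makes explicit the symmetric ingredients (the identity $B_{H^*}=\overline{co}^{w^*}(S_{X^*}\otimes S_{Y^*})$ from Corollary~\ref{inj lemma} and the pairing on elementary tensors) that justify the interchange, which is a harmless elaboration of the same argument.
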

By considering the injective tensor product of Banach spaces $X$ and $Y$ as a subspace of the space of operators $L(X^*,Y),$ we can conclude the following.
\begin{proposition}\label{bscsp prop inj}
 Let $X$, $Y$ be Banach spaces. Suppose $X^*$ has $w^*$-$BDP$ and $Y^*$ has $w^*$-$BSCSP.$ Then $(X \hat{\otimes}_{\varepsilon} Y)^*$ has $w^*$-$BSCSP.$
 \end{proposition}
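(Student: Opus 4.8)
The plan is to recognize that $X \hat{\otimes}_{\varepsilon} Y$ can be realized as a closed subspace of $L(X^{*}, Y)$ and then invoke Proposition~\ref{bscsp in inj tensor} directly. As recalled in the introduction, every $u \in X \hat{\otimes}_{\varepsilon} Y$ corresponds to a ($w^{*}$-$w$ continuous) operator $T_{u} \in L(X^{*}, Y)$, given on elementary tensors by $T_{u}(x^{*}) = \sum_{i} x^{*}(x_{i}) y_{i}$ when $u = \sum_{i} x_{i} \otimes y_{i}$. A direct computation from the definition of the injective norm shows
$$\|T_{u}\| = \sup_{x^{*} \in S_{X^{*}}} \sup_{y^{*} \in S_{Y^{*}}} \Big| y^{*}\Big(\sum_{i} x^{*}(x_{i}) y_{i}\Big)\Big| = \|u\|_{\varepsilon},$$
so $u \mapsto T_{u}$ is an isometry.

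Setting $H := X \hat{\otimes}_{\varepsilon} Y$ viewed inside $L(X^{*}, Y)$ through this isometry, I would first observe that $H$ is a closed subspace of $L(X^{*}, Y)$, being the isometric image of a complete space, and that it contains $X \otimes Y$, since elementary tensors are dense in the completion. Hence the structural hypothesis $X \otimes Y \subset H$ of Proposition~\ref{bscsp in inj tensor} is satisfied.

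By assumption $X^{*}$ has $w^{*}$-$BDP$ and $Y^{*}$ has $w^{*}$-$BSCSP$, so the remaining hypotheses of Proposition~\ref{bscsp in inj tensor} hold verbatim. Applying that proposition yields that $H^{*}$ has $w^{*}$-$BSCSP$, and since $H = X \hat{\otimes}_{\varepsilon} Y$ this is precisely the assertion that $(X \hat{\otimes}_{\varepsilon} Y)^{*}$ has $w^{*}$-$BSCSP$. I do not expect any substantial obstacle here: the only point requiring care is the isometric identification $X \hat{\otimes}_{\varepsilon} Y \hookrightarrow L(X^{*}, Y)$, which is already part of the framework fixed in the introduction, so the statement is essentially a corollary of Proposition~\ref{bscsp in inj tensor}.
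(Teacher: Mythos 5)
Your proposal is correct and coincides with the paper's own argument: the paper derives Proposition~\ref{bscsp prop inj} precisely by viewing $X \hat{\otimes}_{\varepsilon} Y$ isometrically as a closed subspace of $L(X^*,Y)$ containing $X \otimes Y$ and applying Proposition~\ref{bscsp in inj tensor}. Your additional verification of the isometric embedding and the density of $X\otimes Y$ is a harmless (and welcome) elaboration of what the paper leaves implicit.
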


\begin{lemma} \label{bdp lem in inj tensor}
Let $X$ and $Y$ be Banach spaces and $H$ be a closed subspace of $L(X^*,Y).$ 
such that $X \otimes Y\subset H.$ Also assume that each $T\in H$ is $w^*-w$ continuous. If $H^*$ has $w^*$-$BDP,$ then both $X^*$ and $Y^*$ have $w^*$-$BDP.$
\end{lemma}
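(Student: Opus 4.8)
The plan is to mimic the proof of Theorem~\ref{teostrong2}, replacing Corollary~\ref{densidad} by Corollary~\ref{inj lemma} and using the $w^*$--$w$ continuity hypothesis in an essential way. Throughout I use the identification $(x^*\otimes y^*)(T)=y^*(T(x^*))$ for $x^*\in X^*$, $y^*\in Y^*$, $T\in H$.

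First I would unpack the hypothesis that $H^*$ has $w^*$-$BDP$: given $\varepsilon>0$, fix $T\in S_H$ and $\alpha>0$ with $diam(S(B_{H^*},T,\alpha))<\varepsilon$. By Corollary~\ref{inj lemma}, $B_{H^*}=\overline{co}^{w^*}(S_{X^*}\otimes S_{Y^*})$. Since the supremum of the $w^*$-continuous functional $T$ over $B_{H^*}$ coincides with its supremum over $S_{X^*}\otimes S_{Y^*}$, namely $\|T\|=1$, the nonempty relatively $w^*$-open slice $S(B_{H^*},T,\alpha)$ must meet $S_{X^*}\otimes S_{Y^*}$. Thus there exist $x_0^*\in S_{X^*}$ and $y_0^*\in S_{Y^*}$ with $y_0^*(T(x_0^*))>1-\alpha$.

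Next I would produce the two candidate slices. For $Y^*$, put $S_2:=\{y^*\in B_{Y^*}:y^*(T(x_0^*))>1-\alpha\}$; its determining functional is $y^*\mapsto y^*(T(x_0^*))$, given by $T(x_0^*)\in Y=(Y^*)_*$, so $S_2$ is a genuine $w^*$-slice of $B_{Y^*}$ (after normalising $T(x_0^*)$, whose norm lies in $(1-\alpha,1]$). For $X^*$, put $S_1:=\{x^*\in B_{X^*}:y_0^*(T(x^*))>1-\alpha\}$. Here the determining functional is $x^*\mapsto y_0^*(T(x^*))=(T^*y_0^*)(x^*)$, and this is exactly where the continuity hypothesis is used: since $T$ is $w^*$--$w$ continuous, $x^*\mapsto y_0^*(T(x^*))$ is $w^*$-continuous on $X^*$, hence $T^*y_0^*\in X\subseteq X^{**}$, so $S_1$ is likewise a bona fide $w^*$-slice of $B_{X^*}$. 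The inclusions $S_1\otimes y_0^*\subseteq S(B_{H^*},T,\alpha)$ and $x_0^*\otimes S_2\subseteq S(B_{H^*},T,\alpha)$ are immediate from the definitions.

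Finally I would transfer the diameter estimate. The only computation required is the norm identity $\|w^*\otimes y_0^*\|_{H^*}=\|w^*\|$ for $w^*\in X^*$, and symmetrically $\|x_0^*\otimes z^*\|_{H^*}=\|z^*\|$ for $z^*\in Y^*$; the nontrivial inequality $\geq$ is obtained by testing the functional against rank-one operators $x\otimes y$ with $x\in B_X$, $y\in B_Y$, which is where the assumption $X\otimes Y\subseteq H$ enters, noting $(x\otimes y)(w^*)=w^*(x)\,y$. These identities give $diam(S_1)\leq diam(S_1\otimes y_0^*)<\varepsilon$ and $diam(S_2)\leq diam(x_0^*\otimes S_2)<\varepsilon$, and since $\varepsilon>0$ was arbitrary, both $X^*$ and $Y^*$ have $w^*$-$BDP$. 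The main obstacle is precisely the verification that $S_1$ is a $w^*$-slice rather than a mere norm slice of $B_{X^*}$: this is guaranteed only by the $w^*$--$w$ continuity of the operators in $H$, which forces $T^*y_0^*\in X$; once this is secured, the remaining steps run in close parallel to Theorem~\ref{teostrong2}.
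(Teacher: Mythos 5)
Your proposal is correct and follows essentially the same route as the paper's proof: locate a near-norming elementary tensor $x_0^*\otimes y_0^*$, use the $w^*$--$w$ continuity of $T$ to place $T^*y_0^*$ in $X$ so that the induced slice of $B_{X^*}$ is a genuine $w^*$-slice, embed $S_1\otimes y_0^*$ and $x_0^*\otimes S_2$ into the small slice of $B_{H^*}$, and transfer the diameter bound. The only difference is cosmetic: you keep the parameter $\alpha$ throughout and explicitly justify the norm identity $\|w^*\otimes y_0^*\|_{H^*}=\|w^*\|$ via rank-one operators from $X\otimes Y$, whereas the paper uses $\alpha/2$ and $\alpha/4$ in its bookkeeping and leaves that identity implicit.
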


\begin{proof}
Let $\varepsilon>0.$ Since $H^*$ has $w^*$-$BDP,$ let $S(B_{H^*},T,\alpha)$ be a $w^*$slice in $B_{H^*}$ with diameter less than $\varepsilon$ and $\|T\|=1.$ Then there exist $x_0^*\in S_{X^*}$ and $y_0^*\in S_{Y^*}$ such that $y_0^*(Tx_0^*)>1-\frac{\alpha}{2}.$ Since $T$ is $w^*-w$ continuous, then $T^*y_0^*\in X.$ Therefore, we have $$x_0^*(T^*y_0^*)=y_0^*(Tx_0^*)>1-\frac{\alpha}{2}$$
Consider $w^*$slice $S(B_{X^*},T^*y_0^*,\frac{\alpha}{4})$ of $B_{X^*}$ and $w^*$slice $S(B_{Y^*},Tx_0^*,\frac{\alpha}{4})$ of $B_{Y^*}.$ Observe, $S(B_{X^*},T^*y_0^*,\frac{\alpha}{4})\otimes \{y_0^*\}\subset S(B_{H^*},T,\alpha).$ Indeed,  let $(x^*,y_0^*)\in S(B_{X^*},T^*y_0^*,\frac{\alpha}{4})\otimes \{y_0^*\}.$ Then 
$$  T(x^*,y_0^*)=y_0^*(Tx^*)=x^*(T^*y_0^*)>\|T^*y_0^*\|-\frac{\alpha}{4}=1-(1-\|T^*y_0^*\|+\frac{\alpha}{4})>1-\alpha $$
Similarly, $\{x_0^*\} \otimes S(B_{Y^*},Tx_0^*,\frac{\alpha}{4})\subset S(B_{H^*},T,\alpha).$ \\
Hence, $diam (S(B_{X^*},T^*y_0^*,\frac{\alpha}{4}))<\varepsilon$ and $diam(S(B_{Y^*},Tx_0^*,\frac{\alpha}{4}))<\varepsilon.$ Thus, both $X^*$ and $Y^*$ have $w^*$-$BDP.$
\end{proof}

In view of Propositions $\ref{rough}$, $\ref{bscsp in inj tensor}$ and Lemma $\ref{bdp lem in inj tensor}$ we get the following.
\begin{proposition} \label{bdp in inj tensor}
Let $X$ and $Y$ be Banach spaces. Then norm of $X\hat{\otimes}_{\varepsilon} Y$ is non-rough if and only if norm of both $X$ and $Y$ are non-rough.
\end{proposition}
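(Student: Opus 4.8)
The plan is to transfer the statement, via Lemma \ref{rough}, from non-roughness to $w^*$-dentability of duals, and then to prove the resulting assertion by realizing $X\hat{\otimes}_{\varepsilon}Y$ as a space of operators. Since a norm on $Z$ is non-rough exactly when $Z^*$ has $w^*$-$BDP$, applying this to $Z=X\hat{\otimes}_{\varepsilon}Y$, $Z=X$ and $Z=Y$ reduces the claim to the equivalence: $(X\hat{\otimes}_{\varepsilon}Y)^*$ has $w^*$-$BDP$ if and only if both $X^*$ and $Y^*$ have $w^*$-$BDP$. I would then set $H=X\hat{\otimes}_{\varepsilon}Y$, viewed as a closed subspace of $L(X^*,Y)$ containing $X\otimes Y$, and recall from the introduction that every element of $H$ is $w^*$-$w$ continuous, so that $H$ satisfies the hypotheses of both Corollary \ref{inj lemma} and Lemma \ref{bdp lem in inj tensor}.

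For the forward implication I would simply invoke Lemma \ref{bdp lem in inj tensor}: if $H^*=(X\hat{\otimes}_{\varepsilon}Y)^*$ has $w^*$-$BDP$, then both $X^*$ and $Y^*$ have $w^*$-$BDP$, and hence, by Lemma \ref{rough} applied to $X$ and to $Y$, both norms are non-rough.

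The substance lies in the reverse implication, which I would obtain by transporting the computation in the second half of the proof of Theorem \ref{teostrong2} to the injective setting via Corollary \ref{inj lemma}. Given $\varepsilon>0$, the $w^*$-$BDP$ of $X^*$ and $Y^*$ produces $x\in S_X$, $y\in S_Y$ and $\alpha>0$ with $diam(S(B_{X^*},x,\alpha))<\varepsilon$ and $diam(S(B_{Y^*},y,\alpha))<\varepsilon$ (these are genuine $w^*$-slices, since their determining functionals lie in the preduals $X$ and $Y$). I would take $x\otimes y\in X\otimes Y\subseteq H$, which has norm one as an operator in $L(X^*,Y)$, as the determining element and study the $w^*$-slice $S(B_{H^*},x\otimes y,\delta^2)$ for a small $0<\delta<\min\{\alpha,\varepsilon\}$. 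By Corollary \ref{inj lemma} one has $B_{H^*}=\overline{co}^{w^*}(S_{X^*}\otimes S_{Y^*})$, so a point of the slice may be taken of the form $z=\sum_{j}\lambda_j x_j^*\otimes y_j^*$ with $x_j^*\in B_{X^*}$, $y_j^*\in B_{Y^*}$, and the relevant pairing is $(x^*\otimes y^*)(x\otimes y)=x^*(x)\,y^*(y)$, playing exactly the role of $f(x_j)y(y_j^*)$ in Theorem \ref{teostrong2}. Splitting the indices according to whether $x_j^*(x)\,y_j^*(y)\geq 1-\delta$, the same estimate shows the mass of the negligible indices is below $\delta$, while on the dominant part one gets $x_j^*\in S(B_{X^*},x,\alpha)$ and $y_j^*\in S(B_{Y^*},y,\alpha)$, hence $x_j^*$ and $y_j^*$ are within $\varepsilon$ of fixed points $x_0^*,y_0^*$ of these slices; the triangle inequality then bounds $\|z-x_0^*\otimes y_0^*\|$, and thus $diam(S(B_{H^*},x\otimes y,\delta^2))$, by a fixed multiple of $\varepsilon$. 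This yields $w^*$-slices of $B_{H^*}$ of arbitrarily small diameter, and Lemma \ref{rough} converts this back into non-roughness of $X\hat{\otimes}_{\varepsilon}Y$.

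The main obstacle is entirely in this reverse direction: one must verify that the index-splitting and triangle-inequality estimates of Theorem \ref{teostrong2} survive when the first factor $B_X$ is replaced by the dual ball $B_{X^*}$ and the determining functional $f$ is replaced by an element $x$ of the predual $X$. The algebra is formally identical once $x^*(x)\,y^*(y)$ is matched with $f(x_j)y(y_j^*)$, but I expect the delicate points to be confirming that $S(B_{X^*},x,\alpha)$ and $S(B_{Y^*},y,\alpha)$ are the $w^*$-slices actually delivered by the $w^*$-$BDP$ hypotheses, and that $x\otimes y$ attains norm one in $L(X^*,Y)$ so that the slice $S(B_{H^*},x\otimes y,\delta^2)$ is nonempty and correctly traps the convex combination.
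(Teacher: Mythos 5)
Your proposal is correct and follows essentially the same route as the paper: Lemma \ref{rough} to translate non-roughness into $w^*$-$BDP$ of the dual, Lemma \ref{bdp lem in inj tensor} (applicable since every element of $X\hat{\otimes}_{\varepsilon}Y$ is $w^*$-$w$ continuous as an operator on $X^*$) for the forward direction, and the index-splitting estimate transported via Corollary \ref{inj lemma} for the converse. If anything, your explicit adaptation of the second half of Theorem \ref{teostrong2} to produce small $w^*$-slices of $B_{(X\hat{\otimes}_{\varepsilon}Y)^*}$ is tighter than the paper's one-line citation of Proposition \ref{bscsp in inj tensor}, which as stated only yields $w^*$-$BSCSP$ rather than the $w^*$-$BDP$ needed to invoke Lemma \ref{rough}.
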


However, the analogous result for projective tensor product is not true in general.
\begin{example}  If the norm of $l_2\hat{\otimes}_{\pi} l_2$ is non-rough, then $L(\ell_2)$  has slices of arbitrarily small diameter, but it is well known that all slices of $L(\ell_2)$ have diameter two \cite{BeLoPeRo}.
Therefore it follows that $l_2\hat{\otimes}_{\pi} l_2$ does not have non-rough norm, but $l_2$ is a Hilbert space and hence it has non-rough norm.
\end{example}

\begin{Acknowledgement}
The third  author's research is funded by the National Board for Higher Mathematics (NBHM), Department of Atomic Energy (DAE), Government of India, Ref No: 0203/11/2019-R$\&$D-II/9249.
\end{Acknowledgement}

\begin{Conflict of interest statement and data availability}
Not Applicable.
\end{Conflict of interest statement and data availability}

\end{document}